\newtheorem{theorem}{Theorem}
\newtheorem{lemma}[theorem]{Lemma}
\newtheorem{corollary}[theorem]{Corollary}
\def\calG{\mathcal G}
\def\calI{\mathcal I}
\def\wt{\mathrm{wt}}
\def\vol{\mathop{\mathrm{vol}}}
\def\Ex{\mathop{\mathbb{E\null}}}
\def\Pr{\mathop{\mathbb{P\null}}}
\tikzstyle{vertex} = [draw, shape=circle,minimum size=1mm,  inner sep=1pt, fill]
\title[Hard-core model on $H$-free graphs]{Glauber dynamics for the hard-core model\\on bounded-degree $H$-free graphs}
\author[Mark Jerrum]{Mark Jerrum}
\address{School of Mathematical Sciences, Queen Mary, University of London, Mile End Road, London E1~4NS.}
\email{m.jerrum@qmul.ac.uk}
\begin{document}

\begin{abstract}
The hard-core model has as its configurations the independent sets of some graph instance~$G$. The probability distribution on independent sets is controlled by a `fugacity' $\lambda>0$, with higher $\lambda$ leading to denser configurations.  We investigate the mixing time of Glauber (single-site) dynamics for the hard-core model on restricted classes of bounded-degree graphs in which a particular graph~$H$ is excluded as an induced subgraph. If $H$ is a subdivided claw then, for all $\lambda$, the mixing time is $O(n\log n)$, where $n$ is the order of~$G$.  This extends a result of Chen and Gu for claw-free graphs.  When $H$ is a path, the set of possible instances is finite.  For all other $H$, the mixing time is exponential in~$n$ for sufficiently large $\lambda$, depending on $H$ and the maximum degree of~$G$. 
\end{abstract}

\maketitle

\section{Introduction}
The complete bipartite graph $K_{1,3}$ is known as a claw.  A graph is said to be claw-free if it does not contain a claw as an induced subgraph. The hard-core model is specified by a graph~$G$ and a fugacity $\lambda>0$.  These inputs define a probability distribution on independent sets of a graph, the so-called hard-core distribution.  Higher values of $\lambda$ favour larger independent sets, leading to denser configurations.  (Precise definitions will be given presently.) 

Glauber dynamics defines an ergodic Markov chain on independent sets in a graph, whose stationary distribution is the hard-core distribution. We are interested in analysing the mixing time (time to near stationarity) of Glauber dynamics.  With increasing fugacity $\lambda$ the density of configurations increases and we expect the mixing time to increase too. For most graph classes, for example graphs of bounded degree, which have been extensively studied~\cite{ChenLiuVigodaContraction,Sly,Weitz}, there is a critical $\lambda$ above which the mixing time becomes exponential in the size of the graph.  However, in his PhD thesis, Matthews~\cite{Matthews} showed that, for any fixed $\lambda$, however large, the mixing time of Glauber dynamics on a claw-free graph~$G$ is polynomial in $n=|V(G)|$.  

Recently, Chen and Gu~\cite{ChenGu} showed (amongst other things) that, with the additional assumption that $G$ has bounded degree $\Delta$, the mixing time of Glauber dynamics on claw-free graphs is in fact $O(n\log n)$.  The constant implicit in the O-notation depends on $\lambda$ and $\Delta$. We call this mixing `optimal', since $O(n\log n)$ time is necessary for Glauber dynamics to visit every vertex in~$G$, by the coupon-collector argument.  The intuition that $O(n\log n)$ is optimal can be formalised, though doing so is surprisingly challenging~\cite{HayesSinclair}. Prior to Chen and Gu's work, optimal mixing for matchings in bounded-degree graphs --- a related result --- had been shown by Chen, Liu and Vigoda~\cite{ChenLiuVigodaOptimal}. 

It is natural to wonder whether excluding graphs other than the claw can lead to optimal mixing time for all $\lambda$.  There is an extensive literature on independent sets in graph families that exclude a fixed connected graph~$H$ as an induced subgraph.  (From now on we restrict attention to connected~$H$.)  In this line of work, subdivided claws play a crucial role~\cite{AbrishamiEtAl,ChudnovskyQuasi}.  A subdivided claw is obtained from a claw by subdividing the three edges by an arbitrary number of intermediate vertices.  Alternatively, a subdivided claw is a tree with a single vertex of degree~3, and all other vertices having degree 1 or~2.  For $1\leq i\leq j\leq k$, we write $S_{i,j,k}$ for the subdivided claw whose leaves are at distance $i$, $j$ and $k$ from the degree-3 vertex.  The smaller subdivided claws have special names:   $S_{1,1,1}$ (as we have seen) is the \textit{claw}, $S_{1,1,2}$ the \textit{fork}, $S_{1,2,2}$ is the \textit{E}, and $S_{1.2.3}$ is the \textit{skew star}.  See Figure~\ref{fig:stars}.
 
\begin{figure}
\begin{tikzpicture}[scale=0.7, semithick]
   \draw (-0.5,-0.87) node [vertex] (a) {};
   \draw (-0.5,0.87) node [vertex] (b) {};
   \draw (0,0) node [vertex] (c) {};
   \draw (1,0) node [vertex] (d) {};
   \draw (a) -- (c) -- (d);
   \draw (b) -- (c);
   
   \draw (2,-0.87) node [vertex] (fa) {};
   \draw (2,0.87) node [vertex] (fb) {};
   \draw (2.5,0) node [vertex] (fc) {};
   \draw (3.5,0) node [vertex] (fd) {};
   \draw (4.5,0) node [vertex] (fe) {};
   \draw (fa) -- (fc) -- (fd) -- (fe);
   \draw (fb) -- (fc);

   \draw (6.5,0) node [vertex] (ea) {};
   \draw (5.5,1) node [vertex] (eb) {};
   \draw (5.5,0) node [vertex] (ec) {};
   \draw (5.5,-1) node [vertex] (ed) {};
   \draw (6.5,-1) node [vertex] (ee) {};
   \draw (6.5,1) node [vertex] (ef) {};
   \draw (ea) -- (ec) -- (ed) -- (ee);
   \draw (ec) -- (eb) -- (ef);

   \draw (8,-0.87) node [vertex] (sa) {};
   \draw (8,0.87) node [vertex] (sb) {};
   \draw (8.5,0) node [vertex] (sc) {};
   \draw (9.5,0) node [vertex] (sd) {};
   \draw (10.5,0) node [vertex] (se) {};
   \draw (11.5,0) node [vertex] (sf) {};
   \draw (7.5,-1.74) node [vertex] (sg) {};
   \draw (sb) -- (sc) -- (sd) -- (se) -- (sf);
   \draw (sg) -- (sa) -- (sc);

\end{tikzpicture}
\caption{The claw, the fork, the E and the skew star.}
\label{fig:stars}
\end{figure}

It soon becomes apparent that the interesting cases for us are when the excluded graph $H$ is a subdivided claw.  Informally, the reason is this: if $H$ is not a subdivided claw or a path, then it contains either a vertex of degree greater than~3, or a cycle, or two vertices of degree~3 connected by a path.  We can exclude the first case by working with instance graphs~$G$ with maximum degree~3, and the other two cases by working with graphs whose edges have been `stretched' or subdivided by sufficiently many vertices.  This intuition is formalised in Section~\ref{sec:exponential}.  Note that excluding a path in a bounded-degree graph leads to a finite graph class. The (fairly routine) conclusion, expressed in Theorem~\ref{thm:torpid}, is that Glauber dynamics mixes in exponential time for sufficiently large $\lambda$ (depending on $\Delta$ and~$H$ but not on $n=|V(G)|$) when $H$ is not a subdivided claw or a path.    

Our main result, Theorem~\ref{thm:main} in Section~\ref{sec:exponential}, asserts that bounded-degree graphs that exclude any subdivided claw support optimal mixing.  Together with the matching negative result just mentioned, we obtain the following dichotomy.  If $\mathcal H$ is any collection of graphs, we say that a graph $G$ is \textit{$\mathcal H$-free} if $G$ is $H$-free (does not contain $H$ as an induced subgraph) for all $H\in\mathcal H$. 
\begin{theorem}\label{thm:composite}
Let $\mathcal H$ be any finite set of connected graphs, and $\Delta\geq3$. Consider the hard-core model on $\mathcal H$-free graphs of maximum degree~$\Delta$.  If $\mathcal H$ contains a subdivided claw or a path then Glauber dynamics mixes in optimal $O(n\log n)$ time, at any fixed fugacity $\lambda>0$, where $n$ is the number of vertices of~$G$. Otherwise, for some sufficiently large fixed $\lambda$ (depending on $\mathcal H$), the mixing time is exponential in~$n$ in the worst case, even when $\Delta=3$.
\end{theorem}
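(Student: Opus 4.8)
The plan is to derive the dichotomy by combining the two results already described: optimal mixing on subdivided-claw-free graphs (Theorem~\ref{thm:main}) for the positive side, and the torpid-mixing construction behind Theorem~\ref{thm:torpid} for the negative side, the only new ingredient being a \emph{uniform} choice of parameters over the finite set $\mathcal{H}$. For the positive direction, suppose first that $\mathcal{H}$ contains a subdivided claw $S$. Every $\mathcal{H}$-free graph is a fortiori $S$-free, so restricting to the smaller class can only help, and Theorem~\ref{thm:main} already gives optimal $O(n\log n)$ mixing at every fixed $\lambda>0$. Suppose instead that $\mathcal{H}$ contains a path $P_k$ on $k$ vertices. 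A shortest path between two vertices is always an induced path, so a $P_k$-free graph has diameter at most $k-2$; together with the degree bound $\Delta$, a breadth-first search gives $|V(G)|\le 1+\Delta\sum_{i=0}^{k-3}(\Delta-1)^{i}$, a bound independent of $n$. Hence there are only finitely many $\mathcal{H}$-free graphs of maximum degree $\Delta$, their orders are bounded by a constant, and the mixing time is $O(1)$, trivially within the claimed $O(n\log n)$.

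For the negative direction, assume no member of $\mathcal{H}$ is a path or a subdivided claw. I would exhibit a single torpid family that is $\mathcal{H}$-free, reusing the construction of Theorem~\ref{thm:torpid} with parameters chosen to defeat every $H\in\mathcal{H}$ at once. Recall the trichotomy from the introduction: each $H\in\mathcal{H}$, being neither a path nor a subdivided claw, contains a vertex of degree at least $4$, or a cycle, or two vertices of degree $3$ joined by a path. Put $t:=\max_{H\in\mathcal{H}}|V(H)|$, take a family $\{X_N\}$ of $3$-regular bipartite expanders, and let $G_N$ be obtained from $X_N$ by subdividing every edge $t$ times (so each edge becomes a path with $t$ internal vertices).

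Then $G_N$ is $\mathcal{H}$-free, for three reasons. First, $G_N$ has maximum degree $3$, so every $H$ with a vertex of degree $\ge 4$ is excluded outright. Second, cycles of $G_N$ correspond to cycles of $X_N$ with each edge stretched to length $t+1$, so the girth of $G_N$ is at least $4(t+1)>t\ge|V(H)|$; hence $G_N$ has no cycle as short as any cycle occurring in an $H\in\mathcal{H}$, excluding the cyclic members. Third, the degree-$3$ vertices of $G_N$ are exactly the images of $V(X_N)$, which lie at pairwise distance at least $t+1$, whereas in any tree two degree-$3$ vertices are joined by an induced path of length at most $|V(H)|-1\le t-1<t+1$; an induced embedding of such an $H$ would force a $G_N$-path of length $\le t-1$ between two distinct original vertices, contradicting their distance $\ge t+1$. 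So no $H\in\mathcal{H}$ embeds as an induced subgraph.

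The remaining and main task is to show that Glauber dynamics on $G_N$ is torpid for all sufficiently large fixed $\lambda$; this is the step I expect to demand the most care, because subdivision at first sight appears to \emph{destroy} any bottleneck, since a single domain wall can sit cheaply inside a degree-$2$ path. The resolution is that $t$ is a fixed constant: although one path-crossing is cheap, a \emph{global} change of phase must place a domain wall across a balanced cut of the expander backbone $X_N$, and every balanced vertex cut of $G_N$ still has $\Omega(n)$ edges, inherited from the expander backbone precisely because $t$ does not grow with $n$. Writing $V_0,V_1$ for the two colour classes of the bipartite graph $G_N$, both independent sets of size $\Theta(n)$, the hard-core measure at large $\lambda$ concentrates on the two phases that almost fill one colour class. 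Partitioning configurations by the signed imbalance $\sigma(I)=|I\cap V_0|-|I\cap V_1|$, a standard Peierls estimate, using that a balanced cut of $G_N$ has $\Omega(n)$ edges each of which forces an unoccupied vertex, shows that the equatorial set $\{\sigma(I)\approx 0\}$ carries weight smaller than a phase by a factor $\lambda^{-\Omega(n)}$. Its conductance is therefore $\lambda^{-\Omega(n)}$, and the standard conductance lower bound on mixing yields a mixing time exponential in $n$. The threshold fugacity depends only on $\Delta=3$ and on $t$, hence only on $\mathcal{H}$, completing the dichotomy.
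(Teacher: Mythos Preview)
Your approach is essentially the paper's: invoke Theorem~\ref{thm:main} for the positive side, and reuse the stretched-expander construction of Theorem~\ref{thm:torpid} with a single stretch parameter large enough to kill every $H\in\mathcal H$ for the negative side. Your explicit verification of $\mathcal H$-freeness via the trichotomy (degree~$\ge4$, cycle, or two branch vertices) is more detailed than the paper's one-line ``choose $\ell$ large enough'', but the idea is the same.

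There is one slip in your positive direction. When $\mathcal H$ contains a path $P_k$, you argue that $P_k$-free graphs of maximum degree~$\Delta$ have bounded diameter and hence bounded order, concluding ``there are only finitely many $\mathcal H$-free graphs~\dots~and the mixing time is $O(1)$''. This is false as stated: only the \emph{connected components} are bounded, and a disjoint union of arbitrarily many such components is still $P_k$-free, so $n$ is unbounded and the mixing time is not $O(1)$. The fix is easy (bounded components still give $O(n\log n)$ mixing), but the paper sidesteps the issue entirely: since $P_k$ is an induced subgraph of $S_{t,t,t}$ for $t\ge\lceil(k-1)/2\rceil$, every $P_k$-free graph is $S_{t,t,t}$-free, and Theorem~\ref{thm:main} applies directly without any finiteness argument.
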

The above statement is essentially a composite of Theorems \ref{thm:main} and \ref{thm:torpid}, and  the (very short) proof can be found in Section~\ref{sec:exponential}. 

There is a formal similarity between this dichotomy and that of Abrishami, Chudnovsky, Dibek, Cemil and Rz\polhk{a}\.{z}ewski \cite{AbrishamiEtAl}, who studied the problem of finding the largest independent set in a bounded-degree $H$-free graph.  The current result is technically not as difficult.  This is to be expected:  the algorithm, for finding a largest independent set in a claw-free graph is quite involved, whereas that for sampling an independent set is quite simple:  simulate Glauber dynamics for $O(n\log n)$ steps.

It is interesting to note that claw-free graphs arise in nature as crystal structures, for example the kagome and pyrochlore lattices.   However, I am unaware of any real-life crystals that are free of subdivided claws without in fact being claw-free.

In related work, Chudnovsky and Seymour~\cite{ChudnovskySeymour} showed that the partition function of the hard-core model on claw-free graphs has only (negative) real roots.  The existence of a zero-free region that includes the positive real axis has implications for the existence of \emph{deterministic} algorithms for approximating the partition function, through work of Barvinok~\cite{Barvinok} and Patel and Regis~\cite{PatelRegts}.  For fork-free graphs, a zero-free region, smaller but still including the positive real axis, has been identified by Bencs~\cite{BencsPhD}.

Note that the restriction to bounded-degree graphs is crucial.  General fork-free graphs include the complete bipartite graphs of arbitrary sizes, which certainly do not support optimal or even polynomial-time mixing.  Additional restrictions other than bounded degree can lead to polynomial-time algorithms;  for example, Dyer, Jerrum  and M\"uller treat the case of fork-free, odd-hold free graphs (equivalent to fork-free perfect graphs)~\cite{DyerJerrumMuller}.

Finally, although we concentrate on the hard-core model here, a result of a similar flavour has been obtained for the antiferromagnetic Ising model.  For this model, Glauber dynamics has been shown to have optimal mixing time on the class of line graphs at any non-zero temperature however small (i.e., for any interaction strength however large).   This result is due to Chen, Liu and Vigoda~\cite{ChenLiuVigodaHolant}, strengthening the mixing time bound of Dyer, Heinrich, Jerrum and M\"uller~\cite{DyerHeinrichJerrumMuller} from polynomial to optimal. 

\section{Preliminaries}

Suppose $G$ is a graph and $\lambda>0$ a `fugacity'.  A set $I\subseteq V(G)$ is an \emph{independent set} in $G$ if no edge in $E(G)$ has both endpoints in~$I$. The \emph{weight} $\wt(I)$ of independent set $I$ is simply $\wt(I)=\lambda^{|I|}$.  Denote by $\calI_G$ the set of all independent sets in~$G$.  The \emph{hard-core distribution} $\mu_{G,\lambda}$ assigns probability $\mu_{G,\lambda}(I)=\wt(I)/Z(G,\lambda)$ to each $I\in\calI_G$, where $Z(G,\lambda)$ is the \emph{partition function}.   
$$
Z(G,\lambda)=\sum_{I\in\calI_G}\wt(I).
$$
We think of $\mu_{G,\lambda}$ being a distribution on $2^{V(G)}$ with support $\calI_G$.  
%For a vertex $v$ in $G$ we write $\mu_{G,\lambda}^{(v,0)}$ (respectively $\mu_{G,\lambda}^{(v,1)}$) for the distribution on independent sets obtained from $\mu_{G,\lambda}$ by conditioning on~$v$ being excluded (respectively included) in the independent set.

Glauber dynamics defines a Markov chain on the independent sets of $G$.  It is presented in Figure~\ref{fig:Glauber}, where $\Gamma_G(v)$ is used to denote the set of neighbours of~$v$ in~$G$.  This Markov chain is ergodic, and converges to the hard-core distribution on independent sets of~$G$.  

\begin{figure}
\begin{itemize}
\item Let the current state (independent set) be $I$. 
\item Select a vertex $v\in V(G)$ uniformly at random.  
\item If $\Gamma_G(v)\cap I=\emptyset$ then:
\begin{itemize} 
\item with probability $\lambda/(1+\lambda)$, set $I'\leftarrow I\cup\{v\}$;
\item with the remaining probability set $I'\leftarrow I\setminus\{v\}$;
\end{itemize}
\item else: set $I'\leftarrow I$.
\item The new state is $I'$.
\end{itemize}
\caption{Glauber dynamics for the hard-core model.}
\label{fig:Glauber}
\end{figure}

Denoting the $t$-step transition probabilities of this Markov chain by $P^t(\cdot,\cdot)$, convergence to stationarity may be measured by total variation distance (i.e, one half of the $\ell_1$ distance) at time $t$, i.e., 
$$
d(t)=\max_{I\in\calI_G}\big\{d_\mathrm{TV}(P^t(I,\,\cdot\,),\mu_{G,\lambda})\big\}.
$$
Then the \emph{mixing time} of the Markov chain is defined to be $t_\mathrm{mix}=\min\{t:d(t)\leq\frac14\}$.  The constant $\frac14$ is arbitrary, subject to lying in the interval $(0,\frac12)$.  It is standard~\cite[\S4.5]{LevinPeresWilmer} that convergence to stationary is exponential, when time is measured in units of $t_\mathrm{mix}$, specifically,  $d(\ell\,t_\mathrm{mix})\leq 2^{-\ell}$, for all $\ell\in\mathbb{N}$.  The central question is then: is $t_\mathrm{mix}$ bounded by a polynomial in $n=|V(G)|$ and, more particularly, is $t_\mathrm{mix}=O(n\log n)$ (with the constant of proportionality being a function of $\lambda$ and~$\Delta$)?

For a short while we change perspective very slightly and view independent sets in~$G$ as spin configurations in $\{0,1\}^{V(G)}$.  Naturally, the independent set $I\in\calI_G$ corresponds to the spin configuration $\sigma$, where 
$$
\sigma(v)=\begin{cases}0,&\text{if $v\notin I$};\\1,&\text{if $v\in I$}.\end{cases}
$$
Let $\beta_{G,\lambda}$ be the product distribution of independent Bernoulli random variables, with success probability $p=\lambda/(1+\lambda)$ living on the vertices of~$G$. The following simple fact will be important in what follows.

\begin{lemma}\label{lem:sd}
The distribution $\beta_{G,\lambda}$ stochastically dominates the distribution~$\mu_{G,\lambda}$.  That is, there is a coupling of random variables, $\sigma$ and $\sigma'$, distributed as $\mu_{G,\lambda}$ and $\beta_{G,\lambda}$, respectively, such that $\sigma\leq\sigma'$ in the product order on $\{0,1\}^{V(G)}$.
\end{lemma}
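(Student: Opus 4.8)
The plan is to exhibit an explicit monotone coupling, driven by Glauber dynamics. The key observation is that $\beta_{G,\lambda}$ is itself a hard-core distribution, namely the one associated with the edgeless graph $\bar G$ on the vertex set $V(G)$: every subset of $V(G)$ is independent in $\bar G$, and the hard-core weight factorises as $\lambda^{|I|}/(1+\lambda)^{|V(G)|}=\prod_{v}p^{\sigma(v)}(1-p)^{1-\sigma(v)}$ with $p=\lambda/(1+\lambda)$. Thus $\mu_{G,\lambda}$ and $\beta_{G,\lambda}$ are the stationary distributions of Glauber dynamics run on $G$ and on $\bar G$ respectively, and it suffices to couple these two chains so that the coordinatewise order is preserved.

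I would run the two chains in lockstep from the common start $\sigma_0=\sigma'_0=\emptyset$, driving both with shared randomness: at each step select a single vertex $v\in V(G)$ and a single uniform $U\in[0,1]$, used by both chains. In the $\bar G$-chain, $v$ has no neighbours, so set $\sigma'(v)\leftarrow 1$ if $U<p$ and $\sigma'(v)\leftarrow 0$ otherwise. In the $G$-chain, set $\sigma(v)\leftarrow 1$ if $v$ has no occupied neighbour in $\sigma$ and $U<p$, and $\sigma(v)\leftarrow 0$ otherwise. A quick check confirms that each of these updates is exactly the transition of Figure~\ref{fig:Glauber} on the respective graph, and that only coordinate $v$ changes.

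The heart of the argument is the inductive claim that $\sigma_t\leq\sigma'_t$ for all $t$. Assuming this holds before an update of $v$, I verify it afterwards by a two-line case analysis on $U$: if $U<p$ then $\sigma'(v)=1\geq\sigma(v)$ regardless of what the $G$-chain does; if $U\geq p$ then both chains set their $v$-coordinate to $0$. Since all other coordinates are unchanged and were already ordered, the order is maintained. Note also that the $G$-update never destroys independence, because $v$ is added only when it has no occupied neighbour, so $\sigma_t$ remains a genuine sample path of Glauber dynamics on~$G$.

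Finally I would pass from the coupled chains to the stationary measures. Both chains are ergodic, so the marginal laws of $\sigma_t$ and $\sigma'_t$ converge to $\mu_{G,\lambda}$ and $\beta_{G,\lambda}$; since the joint law lives on the finite set $\{(\sigma,\sigma'):\sigma\leq\sigma'\}$, any subsequential limit is a coupling of $\mu_{G,\lambda}$ and $\beta_{G,\lambda}$ supported on $\{\sigma\leq\sigma'\}$, which is precisely the assertion. (Equivalently, the coupling gives $\Pr(\sigma_t\in A)\leq\Pr(\sigma'_t\in A)$ for every increasing event $A$, whence $\mu_{G,\lambda}(A)\leq\beta_{G,\lambda}(A)$ in the limit, and domination follows by Strassen's theorem.) The per-step monotonicity is the conceptual core and is entirely elementary; the only point requiring a little care is this last limiting step. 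I would also remark that the argument yields the more general monotonicity $\mu_{G,\lambda}\preceq\mu_{G',\lambda}$ for any spanning subgraph $G'\subseteq G$, with $\beta_{G,\lambda}$ recovered as the extreme case $G'=\bar G$.
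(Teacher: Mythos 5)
Your proposal is correct and is essentially the paper's own argument: both couple Glauber dynamics on $G$ with the ``unconstrained'' chain (your Glauber dynamics on the edgeless graph is exactly the paper's modified chain with the neighbourhood test removed), drive them with shared randomness, and observe that the product order is preserved at every step before passing to stationarity. The only differences are cosmetic --- you start both chains at $\emptyset$ rather than at the all-$0$/all-$1$ pair, and you spell out the case analysis and the limiting step (via subsequential limits or Strassen) that the paper leaves implicit.
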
 

\begin{proof}
A simple way to see this is by monotone coupling of Markov chains~\cite[\S3.10.3]{FriedliVelenik}.  Modify the Markov chain from Figure~\ref{fig:Glauber} by removing the test $\Gamma_G(v)\cap I=\emptyset$ and the `else' part of the conditional statement.  Thus the probabilistic update is done whatever the state of the vertices adjacent to~$v$.  This modified process clearly converges to the product distribution $\beta_{G,\lambda}$.  Now run the modified process starting from the all-1 configuration in parallel with Glauber dynamics starting from the all-0 configuration.  Use the same random choice of vertex $v$ at each step, and optimally couple the updates.  It is easy to see that the first process remains above the second in the product order.  The upper process converges to $\beta_{G,\lambda}$ and the lower to $\mu_{G,\lambda}$.
\end{proof}

\begin{corollary}\label{cor:marginalboundedness}
Let $G$ be a graph of maximum degree~$\Delta$, and $v$ be a vertex of $G$.  If $I$ is sampled from $\mu_{G,\lambda}$ then
$$
\frac\lambda{(1+\lambda)^{\Delta+1}}\leq \Pr(v\in I)\leq \frac\lambda{1+\lambda}.
$$
\end{corollary}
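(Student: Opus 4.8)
The plan is to treat the two inequalities separately: the upper bound falls straight out of the stochastic domination already in hand, whereas the lower bound needs a short combinatorial argument describing how the partition function reacts to deleting a vertex.

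For the upper bound I would view the event $\{v\in I\}$ as the increasing event $\{\sigma(v)=1\}$ in the product order on $\{0,1\}^{V(G)}$. Under $\beta_{G,\lambda}$ the coordinates are independent Bernoulli variables with success probability $p=\lambda/(1+\lambda)$, so $\Pr_{\beta}(v\in I)=\lambda/(1+\lambda)$. Lemma~\ref{lem:sd} provides a coupling $(\sigma,\sigma')$ with $\sigma\le\sigma'$, $\sigma\sim\mu_{G,\lambda}$ and $\sigma'\sim\beta_{G,\lambda}$; along this coupling $\sigma(v)=1$ forces $\sigma'(v)=1$, whence $\Pr_{\mu}(v\in I)\le\Pr_{\beta}(v\in I)=\lambda/(1+\lambda)$. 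This settles the right-hand inequality with no further work.

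For the lower bound the first step is to write the marginal as a ratio of partition functions. Writing $N[v]=\{v\}\cup\Gamma_G(v)$ for the closed neighbourhood and $G\setminus S$ for the subgraph of $G$ induced on $V(G)\setminus S$, every independent set containing $v$ is uniquely of the form $\{v\}\cup J$ with $J\in\calI_{G\setminus N[v]}$, and $\wt(\{v\}\cup J)=\lambda\,\wt(J)$. Summing gives $\sum_{I\ni v}\wt(I)=\lambda\,Z(G\setminus N[v],\lambda)$, so that $\Pr_{\mu}(v\in I)=\lambda\,Z(G\setminus N[v],\lambda)/Z(G,\lambda)$. It therefore suffices to prove $Z(G,\lambda)\le(1+\lambda)^{\Delta+1}Z(G\setminus N[v],\lambda)$, and since $|N[v]|=1+|\Gamma_G(v)|\le\Delta+1$ this will follow by applying a single-vertex deletion estimate $|N[v]|$ times.

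The crux is that single-deletion bound: for any graph $G'$ and vertex $u$, one has $Z(G',\lambda)\le(1+\lambda)\,Z(G'\setminus\{u\},\lambda)$. I would derive it from the standard recursion $Z(G',\lambda)=Z(G'\setminus\{u\},\lambda)+\lambda\,Z(G'\setminus N[u],\lambda)$, obtained by splitting $\calI_{G'}$ according to whether $u$ is present, and then observing that $\calI_{G'\setminus N[u]}\subseteq\calI_{G'\setminus\{u\}}$ — deleting the further vertices $\Gamma_{G'}(u)$ only removes independent sets while preserving their weights — so that $Z(G'\setminus N[u],\lambda)\le Z(G'\setminus\{u\},\lambda)$. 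Substituting this into the recursion gives the factor $1+\lambda$. Deleting the vertices of $N[v]$ one at a time and chaining the inequality yields $Z(G,\lambda)\le(1+\lambda)^{|N[v]|}Z(G\setminus N[v],\lambda)\le(1+\lambda)^{\Delta+1}Z(G\setminus N[v],\lambda)$, which with the ratio formula completes the lower bound. The only delicate point is the monotonicity $Z(G'\setminus N[u],\lambda)\le Z(G'\setminus\{u\},\lambda)$, which I expect to be the main thing to state carefully, though it is genuinely elementary. (I note in passing that the upper bound also drops out of the same recursion, using instead $Z(G'\setminus\{u\},\lambda)\ge Z(G'\setminus N[u],\lambda)$, so the stochastic-domination route above is merely the quicker of two available arguments.)
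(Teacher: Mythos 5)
Your proof is correct, but your lower bound takes a genuinely different route from the paper's. For the upper bound you do exactly what the paper does: apply the stochastic domination of Lemma~\ref{lem:sd} to the increasing event $\{\sigma(v)=1\}$. For the lower bound, however, the paper stays with Lemma~\ref{lem:sd}: it observes that $\{I\cap\Gamma(v)=\emptyset\}$ is a decreasing event, so domination by the Bernoulli product measure gives $\Pr(I\cap\Gamma(v)=\emptyset)\geq 1/(1+\lambda)^{\Delta}$, and then multiplies by the conditional probability $\Pr\big(v\in I\mid I\cap\Gamma(v)=\emptyset\big)\geq\lambda/(1+\lambda)$ --- a two-line argument. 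You instead work directly with partition functions: you express the marginal as $\lambda\,Z(G\setminus N[v],\lambda)/Z(G,\lambda)$, prove the single-vertex deletion bound $Z(G',\lambda)\leq(1+\lambda)\,Z(G'\setminus\{u\},\lambda)$ via the recursion $Z(G',\lambda)=Z(G'\setminus\{u\},\lambda)+\lambda\,Z(G'\setminus N[u],\lambda)$ together with the containment $\calI_{G'\setminus N[u]}\subseteq\calI_{G'\setminus\{u\}}$, and chain this over the at most $\Delta+1$ vertices of $N[v]$. All steps check out, so the argument is sound. What your route buys is self-containedness: the lower bound uses no coupling or domination at all, only elementary counting identities for $Z$, and (as you note) the upper bound drops out of the same recursion, so the whole corollary could be proved without Lemma~\ref{lem:sd}. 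What the paper's route buys is brevity: since Lemma~\ref{lem:sd} is already established for other purposes, both bounds follow in a sentence each, whereas your chain of deletions requires stating and verifying its own small monotonicity lemma.
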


\begin{proof}
The upper bound is immediate from Lemma~\ref{lem:sd}.  For the lower bound, note that $\Pr(I\cap\Gamma(v)=\emptyset)\geq 1/(1+\lambda)^\Delta$, again from Lemma~\ref{lem:sd}, and that $\Pr(v\in I\mid I\cap\Gamma(v)=\emptyset)\geq \lambda/(1+\lambda)$.\end{proof}

Suppose $\mu$ and $\mu'$ are probability distributions on $\{0,1\}^{V(G)}$.  The Wasserstein (or Kantorovich-Rubinstein) distance between $\mu$ and $\mu'$ (specialised to this application) is defined to be
$$
W_1(\mu,\mu')=\inf_{\sigma\sim\mu,\sigma'\sim\mu'}d_\mathrm{H}(\sigma,\sigma'),
$$
where $d_\mathrm{H}$ denotes Hamming distance, and the infimum is over all couplings of random variables $\sigma,\sigma'$ with the appropriate distributions.  

For a graph $G$ and vertex $v\in V(G)$ let $\mu_{G,\lambda}^{(v,0)}$ (respectively, $\mu_{G,\lambda}^{(v,1)}$) denote the hard-core distribution on~$G$ at fugacity~$\lambda$ conditioned on vertex~$v$ being excluded from (respectively, included in) the independent set.  We will be using the powerful spectral independence machinery of Anari, Liu, Oveis Gharan and Vinzant~\cite{ALOV} and Anari, Liu and Oveis Gharan~\cite{ALO}, as sharpened by Cryan, Guo and Mousa~\cite{CryanGuoMousa} and Chen, Liu and Vigoda~\cite{ChenLiuVigodaOptimal}.  However, we can ease our task by using Chen and Gu's packaging of the machinery for situations such as ours.  Indeed, as we shall see presently, to show optimal mixing we just need to construct a coupling that achieves bounded Wasserstein distance $W_1\big(\mu_{G,\lambda}^{(v,0)},\mu_{G,\lambda}^{(v,1)}\big)$. 

A graph class is said to be \emph{hereditary} if it is closed under taking induced subgraphs.  Note that graph classes defined by excluding induced subgraphs, with or without a degree bound, are hereditary.

\begin{corollary}\label{cor:mixing}
Let $\calG$ be a hereditary class of graphs with degree bound $\Delta$, and $\lambda$ be positive.   Suppose there exists $\eta>0$ such that for all $G\in\calG$ and all $v\in V(G)$ we have $W_1(\mu_{G,\lambda}^{(v,0)},\mu_{G,\lambda}^{(v,1)})\leq\eta$, where $\mu_{G,\lambda}$ is the hard-core distribution with fugacity $\lambda$ on $V(G)$. Then the mixing time of Glauber dynamics for the hard-core model on $G\in\calG$ at fugacity~$\lambda$ is bounded above by $O(n\log n)$, where $n=|V(G)|$.  The constant implicit in the $O$-notation depends on  $\lambda$, $\Delta$ and $\eta$.
\end{corollary}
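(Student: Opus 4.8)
The plan is to verify the hypotheses of the spectral independence framework in the optimal-mixing form packaged by Chen and Gu~\cite{ChenGu} (itself resting on \cite{ALOV,ALO,CryanGuoMousa,ChenLiuVigodaOptimal}), and then invoke that machinery as a black box. The only quantities the framework asks us to control are a spectral independence constant and a marginal boundedness constant, each required uniformly over all conditionings of the distribution. The hereditary and bounded-degree assumptions on $\calG$ will let me extract both of these from the single hypothesis $W_1\big(\mu_{G,\lambda}^{(v,0)},\mu_{G,\lambda}^{(v,1)}\big)\leq\eta$.

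The first step translates the Wasserstein bound into a spectral independence bound. For the hard-core distribution $\mu_{G,\lambda}$ and vertices $v\neq w$, define the (signed) influence of $v$ on $w$ to be the difference of conditional marginals $\Pr_{\mu_{G,\lambda}^{(v,1)}}(w\in I)-\Pr_{\mu_{G,\lambda}^{(v,0)}}(w\in I)$, and collect these numbers into the influence matrix (with zero diagonal). Any coupling $(\sigma,\sigma')$ of $\mu_{G,\lambda}^{(v,0)}$ and $\mu_{G,\lambda}^{(v,1)}$ satisfies $\Ex[d_\mathrm{H}(\sigma,\sigma')]=\sum_w\Pr(\sigma(w)\neq\sigma'(w))\geq\sum_w\big|\Pr_{\mu_{G,\lambda}^{(v,1)}}(w\in I)-\Pr_{\mu_{G,\lambda}^{(v,0)}}(w\in I)\big|$, so taking the infimum over couplings shows that the $\ell_1$-norm of the $v$-th row of the influence matrix is at most $W_1\big(\mu_{G,\lambda}^{(v,0)},\mu_{G,\lambda}^{(v,1)}\big)\leq\eta$. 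Since every absolute row sum is bounded by $\eta$, the induced $\ell_\infty$ operator norm, and hence the spectral radius, of the influence matrix is at most $\eta$; that is, $\mu_{G,\lambda}$ is $\eta$-spectrally independent.

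The second step supplies uniformity over conditionings. Conditioning the hard-core model on a set of vertices being in or out of the independent set yields, after deleting the pinned vertices and (for those pinned in) their neighbours, the hard-core model at the same fugacity $\lambda$ on an \emph{induced subgraph} of $G$. Because $\calG$ is hereditary with the same degree bound $\Delta$, this induced subgraph again lies in $\calG$, so the hypothesis $W_1\leq\eta$ applies to it verbatim; repeating Step~1 on every such pinned distribution shows that $\mu_{G,\lambda}$ is $\eta$-spectrally independent under \emph{all} pinnings, the complete spectral independence demanded by the machinery. The same heredity, combined with Corollary~\ref{cor:marginalboundedness}, furnishes marginal boundedness uniformly: in every pinned induced subgraph each single-vertex marginal lies in $\big[\lambda(1+\lambda)^{-(\Delta+1)},\,\lambda(1+\lambda)^{-1}\big]$, bounded away from $0$ and $1$ by constants depending only on $\lambda$ and $\Delta$.

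With complete $\eta$-spectral independence and uniform marginal boundedness in hand, the optimal-mixing theorem of Chen, Liu and Vigoda~\cite{ChenLiuVigodaOptimal}, as packaged in~\cite{ChenGu}, delivers a modified log-Sobolev inequality for Glauber dynamics and thence $t_\mathrm{mix}=O(n\log n)$, with the implied constant a function of $\lambda$, $\Delta$ and $\eta$. The genuinely hard analytic work---the local-to-global, entropy-factorisation argument that converts spectral independence into an \emph{optimal} (rather than merely polynomial) mixing time---lives entirely inside those cited results, so the obstacle here is not analytic but one of bookkeeping. The delicate point is Step~2: upgrading a single Wasserstein hypothesis to a bound valid under all pinnings, which is precisely where the hereditary assumption on $\calG$ is indispensable, since without it the pinned distributions need not belong to a class for which the $W_1$ bound is known.
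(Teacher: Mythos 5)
Your proposal is correct and follows essentially the same route as the paper: translate the $W_1$ hypothesis into spectral independence, use heredity (pinning to $0$ deletes $v$, pinning to $1$ deletes $v$ and its neighbours) to get this uniformly over all pinnings, obtain marginal boundedness from Corollary~\ref{cor:marginalboundedness}, and feed everything into the Chen--Gu packaging of the Chen--Liu--Vigoda optimal-mixing machinery. The only difference is cosmetic: you inline the coupling argument bounding the influence-matrix row sums by $W_1$, whereas the paper delegates exactly this step to Lemma~10 of~\cite{ChenGu}.
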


\begin{proof}
This follows from~\cite[Thm 9 and Lemma 10]{ChenGu},\footnote{The quoted results are stated for edge-based configurations rather than vertex-based, but this is inessential.} which builds on earlier work, such as \cite{ChenLiuVigodaOptimal}. In order to make the correspondence we need to discuss briefly the notion of `pinning'.  The distributions $\mu_{G,\lambda}^{(v,0)}$ and $\mu_{G,\lambda}^{(v,1)}$ are examples of distributions obtained by pinning, in this instance by pinning vertex $v$ to 0 and~1, respectively.  More generally, we may consider pinning a set of vertices, say $\Lambda\subset V(G)$, to values given by $\tau:\Lambda\to\{0,1\}$, resulting in a marginal distribution $\mu_{G,\lambda}^{(\Lambda,\tau)}$ on $V(G)\setminus\Lambda$.  Lemma 10 of~\cite{ChenGu} quantifies over pairs of pinnings $(\Lambda,\tau)$ and $(\Lambda,\tau')$ where $\tau$ and $\tau'$ differ at exactly one point.  However, the hard-core distribution has a pleasant property:  pinning a vertex~$v$ to~0 is equivalent to removing $v$ from the graph, while pinning~$v$ to~1 is equivalent to removing $v$ and all its neighbours.  So, in the case of the hard-core model we need only quantify over single vertex pinnings, since we are restricting attention to graph classes which are closed under taking induced subgraphs.

From our premisses, Lemma 10 of~\cite{ChenGu} tells us that that $\mu_{G,\lambda}$ is `$\eta$-spectrally bounded'.  For our purposes we don't need to define this concept as we are merely going to feed this information about $\mu_{G,\lambda}$ into Theorem~9 of~\cite{ChenGu}. The remaining premiss required for this theorem is that $\mu_{G,\lambda}$ is `$b$-marginally bounded'.  This just means that $\Pr(\sigma(v)=1)$ is bounded away from both 0 and 1 by a margin~$b$.  But this is exactly what is guaranteed by  Corollary~\ref{cor:marginalboundedness}, with $b=\min\{\lambda/(1+\lambda)^{\Delta+1}, 1/(1+\lambda)\}$.
\end{proof}

\section{$S_{1,1,t}$-free graphs (claw-free and upwards)}
Suppose $\Delta\geq3$ is a degree bound and $H$ a fixed graph.  Let $G$ be an $H$-free (i.e., excluding induced copies of~$H$) connected graph of maximum degree~$\Delta$.  Chen and Gu~\cite[Thm~4]{ChenGu} show optimal mixing of Glauber dynamics for the hard-core model on $G$, for any given fugacity~$\lambda>0$, when the excluded graph~$H$ is the claw.  In fact, optimal mixing also holds for $H=S_{1,1,t}$ with $t\geq2$, for a rather dull reason.  Define 
$$
\vol(\Delta,t)=1+\Delta\frac{(\Delta-1)^t-1}{\Delta-2},
$$
and note that $\vol(\Delta,t)$ bounds the number of vertices in a radius-$t$ ball around any vertex of a graph of maximum degree~$\Delta$.  

\begin{lemma}\label{lem:S111impliesS11t}
Suppose that $G$ is a connected graph of maximum degree $\Delta$, and that $t\geq2$.  If $|V(G)|>\vol(\Delta,t+1)$ and $G$ contains an induced claw, then $G$ contains an induced $S_{1,1,t}$.
\end{lemma}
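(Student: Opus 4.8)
The plan is to start from the given induced claw, take a long shortest path emanating from it, and then assemble an induced $S_{1,1,t}$ by attaching two of the claw leaves as the short arms. Write $c$ for the centre of the claw and $a,b,d$ for its three (pairwise non-adjacent) leaves. Since a ball of radius $t+1$ about $c$ contains at most $\vol(\Delta,t+1)$ vertices and $|V(G)|$ exceeds this, some vertex lies at distance at least $t+2$ from $c$; as $G$ is connected, a breadth-first search from $c$ then yields a shortest path $c=p_0,p_1,\dots,p_{t+2}$. A shortest path is induced, and this will serve as the backbone of the long arm.

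The key simplifying observation is a distance constraint: since each leaf lies at distance $1$ from $c$, a leaf can be adjacent to $p_i$ only when $i\le 2$, and can coincide with $p_i$ only when $i=1$. Hence the only way a leaf can create an unwanted chord with the backbone is by being equal to $p_1$ or adjacent to $p_1$ or to $p_2$; everything further along the path is automatically safe. This reduces the whole problem to controlling the interaction of the three leaves with the two vertices $p_1,p_2$.

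I would then split into cases according to $m$, the number of leaves adjacent to $p_2$ (noting that a leaf equal to $p_1$ is adjacent to $p_2$). If $m\ge 2$, I re-centre the star at $p_2$: two such leaves together with $p_3$ form an induced claw at $p_2$ (mutual non-adjacency follows from the claw and the distance bound), and $p_2p_3\cdots p_{t+2}$ is the long arm. If $m=1$, say $a\sim p_2$, I reroute the backbone through $a$ to obtain the induced path $c,a,p_2,\dots,p_t$ and attach $b,d$ as short arms; crucially the other two leaves are non-adjacent to $a$ (claw) and to $p_2$ (as $m=1$), while the old $p_1$ has simply been discarded. If $m=0$ then no leaf equals or is adjacent to $p_2$, so each leaf is either \emph{good} (also non-adjacent to, and distinct from, $p_1$) or adjacent to $p_1$; if at least two leaves are good I use the backbone $c,p_1,\dots,p_t$ directly with those two as short arms, and otherwise at least two leaves are adjacent to $p_1$, whereupon I re-centre at $p_1$, using those two leaves and $p_2$ as the claw and $p_1p_2\cdots p_{t+1}$ as the long arm. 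In every case the longest arm used reaches at most $p_{t+2}$, which is exactly why distance $t+2$ (equivalently the bound $\vol(\Delta,t+1)$) is required.

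The routine part is verifying that each candidate subgraph is genuinely induced and isomorphic to $S_{1,1,t}$: that the short arms attach only at the chosen centre, that they are mutually non-adjacent, and that they avoid the rest of the backbone, all of which follow mechanically from the claw structure, the induced-path property, and the distance constraint above. The main obstacle, and the only place requiring real thought, is when $m\ge 1$: a naive attempt to keep the star centred at $c$ fails precisely when two leaves attach to $p_2$ or when a leaf attaches to $p_1$, and the fix is to allow the centre of the star to migrate to $p_1$ or $p_2$. Ensuring that the three constructions together exhaust all possibilities for the leaf--$\{p_1,p_2\}$ adjacencies is where the argument must be watertight.
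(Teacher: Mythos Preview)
Your proposal is correct and follows essentially the same approach as the paper: grow a long shortest path out of the claw, observe that the leaves can only interfere with the first one or two vertices of the path, and then do a small case analysis, re-centring the star when necessary. The only organisational difference is that the paper takes a shortest path from the whole set $\{c,a,b,d\}$ to a far vertex rather than from the centre $c$ alone; this confines all possible extra adjacencies to the single vertex $w_1$ (instead of your two vertices $p_1,p_2$), yielding four symmetry classes of the edge-set $A\subseteq\{cw_1,aw_1,bw_1,dw_1\}$ in place of your split on $m$ and the good/bad dichotomy --- but the resulting cases and constructions correspond directly to yours.
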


\begin{proof}
Let $G$ be as in the statement and suppose $\{v,a,b,c\}\subseteq V(G)$ induces a claw centred at~$v$.  As $\vol(\Delta,t+1)$ is an upper bound on the number of vertices of $G$ within distance $t+1$ of $v$, there must exist a vertex $w$ at distance $t+2$ from~$v$. Consider a minimum length path from $\{v,a,b,c\}$ to $w$, which clearly has length at least $t+1$.  Let this path be $P=(w_0,w_1,\ldots,w_s)$, where $w_0\in\{v,a,b,c\}$, $w_s=w$ and $s\geq t+1$.  Consider the graph induced by the vertex set $U=\{v,a,b,c,w_1,w_2,\ldots, w_s\}$.  By minimality of $P$, the graph $G[U]$ contains edges $\{\{v,a\},\{v,b\},\{v,c\},\{w_1,w_2\}, \{w_2,w_3\},\ldots, \{w_{s-1},w_s\}\}$ together with some non-empty subset $A$ of $\{\{v,w_1\},\{a,w_1\},\{b,w_1\},\{c,w_1\}\}$.  The 15 possibilities for~$A$ fall into four cases, up to symmetry:
\begin{itemize}
\item $A=\{\{v,w_1\}\}$: the set $\{v,a,b,w_1,\ldots, w_{t}\}$ induces $S_{1,1,t}$;
\item $A=\{\{a,w_1\}\}$: the set $\{v,b,c,a,w_1,\ldots,w_{t-1}\}$ induces $S_{1,1,t}$;
\item $A=\{\{v,w_1\},\{a,w_1\}\}$: the set $\{v,b,c,w_1,\ldots,w_{t}\}$ induces $S_{1,1,t}$;
\item $A\supseteq\{\{a,w_1\},\{b,w_1\}\}$: the set $\{w_1,a,b,w_2,\ldots,w_{t+1}\}$ induces $S_{1,1,t}$.
\end{itemize}
The cases are exhaustive, completing the proof.
\end{proof}

\begin{corollary}\label{cor:finitevariation}
Restricted to connected graphs of maximum degree $\Delta$, the class of $S_{1,1,t}$-free graphs is a finite variation of the class of claw-free graphs, for any $t\geq2$.
\end{corollary}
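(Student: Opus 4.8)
The plan is to read `finite variation' as meaning that the symmetric difference between the two graph classes contains only finitely many graphs up to isomorphism, and then to observe that this symmetric difference is one-sided and consists entirely of graphs that Lemma~\ref{lem:S111impliesS11t} forces to be small.

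First I would establish the easy containment. The key observation is that $S_{1,1,t}$ itself contains an induced claw: if $v$ is the degree-3 vertex, $a$ and $b$ are the two leaves at distance~$1$, and $w_1$ is the first vertex along the length-$t$ arm, then $\{v,a,b,w_1\}$ induces a claw, since the only edges among these four vertices are $\{v,a\}$, $\{v,b\}$, $\{v,w_1\}$ (the graph $S_{1,1,t}$ being a tree). Consequently any graph containing an induced $S_{1,1,t}$ also contains an induced claw; taking contrapositives, every claw-free graph is $S_{1,1,t}$-free. Thus, within connected graphs of maximum degree~$\Delta$, the claw-free class is \emph{contained} in the $S_{1,1,t}$-free class, so the symmetric difference reduces to those graphs that are $S_{1,1,t}$-free yet still contain an induced claw.

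It then remains only to bound this one-sided difference, and here I would simply invoke the contrapositive of Lemma~\ref{lem:S111impliesS11t}: if a connected graph~$G$ of maximum degree~$\Delta$ contains an induced claw but is $S_{1,1,t}$-free, then necessarily $|V(G)|\leq\vol(\Delta,t+1)$. Since there are only finitely many graphs, up to isomorphism, on at most $\vol(\Delta,t+1)$ vertices, the symmetric difference is finite, which is exactly the assertion.

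I do not expect any genuine obstacle: the substantive combinatorics has already been discharged in Lemma~\ref{lem:S111impliesS11t}, and this corollary is essentially a repackaging of it. The only point meriting a moment's care is getting the direction of the inclusion right --- namely that $S_{1,1,t}$-freeness is the \emph{weaker} of the two conditions --- and that follows immediately from the fact that the subdivided claw contains the claw as an induced subgraph.
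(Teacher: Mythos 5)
Your proposal is correct and matches the paper's (implicit) argument: the corollary is stated as an immediate consequence of Lemma~\ref{lem:S111impliesS11t}, and your reading --- claw-free graphs are automatically $S_{1,1,t}$-free since $S_{1,1,t}$ contains an induced claw, while the lemma's contrapositive bounds any $S_{1,1,t}$-free graph containing an induced claw by $\vol(\Delta,t+1)$ vertices --- is exactly the intended one. No gaps.
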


\begin{corollary}
For any $\Delta\geq3$ any $t\geq1$ and any $\lambda>0$, Glauber dynamics for the hard-core model at fugacity~$\lambda$ has mixing time $O(n\log n)$ on the class of $S_{1,1,t}$-free graphs of maximum degree~$\Delta$.
\end{corollary}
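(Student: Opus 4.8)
The plan is to deduce the result from Chen and Gu's claw-free theorem~\cite[Thm~4]{ChenGu} together with the finite-variation phenomenon captured by Lemma~\ref{lem:S111impliesS11t}. The case $t=1$ is literally their result, since $S_{1,1,1}$ is the claw, so I would dispose of it in one line and assume $t\geq2$ henceforth. For the main case I would verify the hypothesis of Corollary~\ref{cor:mixing} for the hereditary class $\calG$ of $S_{1,1,t}$-free graphs of maximum degree~$\Delta$: namely, that there is a single $\eta>0$ with $W_1\big(\mu_{G,\lambda}^{(v,0)},\mu_{G,\lambda}^{(v,1)}\big)\leq\eta$ for every $G\in\calG$ and every $v\in V(G)$. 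The mixing bound then falls out of Corollary~\ref{cor:mixing} directly, and this route has the advantage of handling disconnected instances automatically.

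First I would reduce the Wasserstein quantity to a single connected component. Writing $G_v$ for the component of~$G$ containing~$v$, the hard-core distribution factorises as a product over components, and pinning~$v$ to~$0$ or~$1$ alters only the factor on~$G_v$. Coupling the two pinned distributions by the identity on every other component, I obtain $W_1\big(\mu_{G,\lambda}^{(v,0)},\mu_{G,\lambda}^{(v,1)}\big)=W_1\big(\mu_{G_v,\lambda}^{(v,0)},\mu_{G_v,\lambda}^{(v,1)}\big)$. Since an induced claw or $S_{1,1,t}$ is connected and hence lives in a single component, $G_v$ is itself a connected $S_{1,1,t}$-free graph of maximum degree~$\Delta$, and it suffices to bound the right-hand side over all such $G_v$.

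Now I would split according to the dichotomy furnished by Lemma~\ref{lem:S111impliesS11t}. If $G_v$ is claw-free, I invoke the fact that Chen and Gu's proof of their Theorem~4 establishes precisely such a uniform bound $\eta_{\mathrm{cf}}=\eta_{\mathrm{cf}}(\Delta,\lambda)$ on the Wasserstein distance for the bounded-degree claw-free class (this is the quantity their spectral-independence estimate controls, and the one fed into the machinery behind Corollary~\ref{cor:mixing}). If $G_v$ is not claw-free, then it contains an induced claw, and since it is $S_{1,1,t}$-free the contrapositive of Lemma~\ref{lem:S111impliesS11t} forces $|V(G_v)|\leq\vol(\Delta,t+1)$; here I use the trivial bound obtained by coupling the two pinned distributions arbitrarily, the Hamming distance $d_\mathrm{H}$ never exceeding the number of vertices, so that $W_1\leq\vol(\Delta,t+1)$. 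These two cases exhaust all connected $G_v\in\calG$. (Incidentally $S_{1,1,t}$ itself contains an induced claw, formed by its centre together with its two leaf-neighbours and the first vertex of the long path, so every claw-free graph is $S_{1,1,t}$-free; this is the reverse inclusion behind the finite-variation statement of Corollary~\ref{cor:finitevariation}.) Taking $\eta=\max\{\eta_{\mathrm{cf}},\vol(\Delta,t+1)\}$ and applying Corollary~\ref{cor:mixing} completes the argument.

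The one step requiring care is the claw-free case: I am using that Chen and Gu's work yields not merely an $O(n\log n)$ mixing time but the underlying uniform Wasserstein bound over all bounded-degree claw-free graphs, so it is worth recording this explicitly rather than treating their Theorem~4 as an opaque mixing-time statement. If one insisted on using only the mixing-time conclusion, the alternative would be to argue at the level of Glauber dynamics directly, noting that large components are claw-free and mix in $O(n\log n)$ steps while the finitely many small exceptional components mix in $O(1)$ steps; but one would then have to control the combined dynamics on a disconnected instance under the shared update clock, which is exactly the complication that the Wasserstein route sidesteps.
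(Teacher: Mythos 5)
Your proposal is correct, and it reaches the conclusion by a genuinely different route from the paper. The paper's own proof is a one-line, black-box combination at the level of mixing-time statements: by Corollary~\ref{cor:finitevariation} (the contrapositive of Lemma~\ref{lem:S111impliesS11t}), the connected $S_{1,1,t}$-free graphs of maximum degree~$\Delta$ are exactly the connected claw-free ones plus finitely many exceptional graphs of size at most $\vol(\Delta,t+1)$; Chen and Gu's Theorem~4 covers the former, the exceptions are finite, and the paper declares the result immediate. You instead descend one level of abstraction and verify the Wasserstein hypothesis of Corollary~\ref{cor:mixing} for the whole hereditary class: factorise over connected components so that only the component $G_v$ of the pinned vertex matters, then apply the same dichotomy componentwise --- a claw-free $G_v$ inherits a uniform bound $\eta_{\mathrm{cf}}(\Delta,\lambda)$, while a claw-containing $G_v$ satisfies $|V(G_v)|\leq\vol(\Delta,t+1)$ and hence trivially $W_1\big(\mu_{G_v,\lambda}^{(v,0)},\mu_{G_v,\lambda}^{(v,1)}\big)\leq\vol(\Delta,t+1)$. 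What your route buys is rigour on disconnected instances: read strictly, the paper's argument (and Corollary~\ref{cor:finitevariation} itself) concerns connected graphs, whereas an $S_{1,1,t}$-free graph may contain both a large claw-free component and a small claw-containing one, so it is neither claw-free nor of bounded size, and combining per-component mixing times under the shared update clock is exactly the nuisance you identify; your Wasserstein argument sidesteps it, since Corollary~\ref{cor:mixing} applies to the full, possibly disconnected, graph. What the paper's route buys is brevity and independence from the internals of~\cite{ChenGu}. Your parenthetical observation that claw-freeness implies $S_{1,1,t}$-freeness is also correct and is indeed the reverse inclusion behind Corollary~\ref{cor:finitevariation}.

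The one fragile step is the claw-free case, as you yourself flag: you need, as input, not the \emph{statement} of Chen--Gu's Theorem~4 but a uniform Wasserstein bound over all bounded-degree claw-free graphs, and you justify it by an assertion about how their proof proceeds, which cannot be verified from the theorem statement alone. This dependency is avoidable: Corollary~\ref{cor:clawW} of the present paper, proved via the van den Berg--Brouwer coupling, gives exactly $W_1\big(\mu_{G,\lambda}^{(v,0)},\mu_{G,\lambda}^{(v,1)}\big)\leq(\lambda+1)^{2\Delta}$ for claw-free $G$ of maximum degree~$\Delta$, and its proof is independent of the corollary under discussion, so there is no circularity in citing it. Substituting that reference for the appeal to the internals of~\cite{ChenGu} makes your argument airtight.
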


\begin{proof}
This is immediate from \cite[Thm~4]{ChenGu} and Corollary~\ref{cor:finitevariation}.  
\end{proof}

\section{$S_{1,2,t}$-free graphs (E-free and upwards)}

Before heading off to larger graph classes it is instructive to go back to a precursor of~\cite{ChenGu}.  Van den Berg and Brouwer~\cite{vandenBergBrouwer} construct an explicit coupling for matchings in a bounded-degree graph that (in modern terms) bounds spectral independence.\footnote{Although this reference is closest to our application, the general concept of `disagreement percolation' appears earlier in work of van den Berg and Maes~\cite{vandenBergMaes} and van den Berg and Steif~\cite{vandenBergSteif}.}  This almost immediately gives optimal mixing of Glauber dynamics for the monomer-dimer model at all temperatures.  (Of course, van den Berg and Brouwer did not have the spectral independence technology, so they were only able to deduce low-degree polynomial mixing time.)  Their method extends easily to independent sets in claw-free graphs (though this was not stated in their paper).  In this section, we rederive the result of Chen and Gu, as the proof will provide a base from which we may progress to large subdivided claws.   

\begin{figure}
\begin{itemize}
\item Let $R$ be a random `red' independent set from $\mu_{G,\lambda}^{(v,1)}$.
\item Let $B$ be a random `blue' independent set from $\mu_{G,\lambda}^{(v,0)}$,  with $B$ independent of $R$.
\item Consider the subgraph $G[R\cup B]$ of $G$ induced by the union of the two independent sets.  Let $C\subseteq V(G)$ be the connected component of  $G[R\cup B]$ containing~$v$.  We refer to $C$ as a cluster.  
\item Modify the blue independent set by setting $B\leftarrow(B\cap C)\cup(R\setminus C)$.  Thus, $B$ is made to agree with~$B$ inside~$C$ and with~$R$ outside. The independent set $R$ remains unchanged.
\item $(R,B)$ is the required coupling.
\end{itemize}
\caption{The coupling of van den Berg and Brouwer.  Here, $G$ is a graph, $v\in V(G)$ and $\lambda>0$.}
\label{fig:couple}
\end{figure}
The coupling of van den Berg and Brouwer, translated to independent sets, is given in Figure~\ref{fig:couple}.

\begin{lemma}\label{lem:couple}
Let $G$ be a connected claw-free graph, $v\in V(G)$ be a vertex, and $\lambda>0$. Then the coupling of Figure~\ref{fig:couple} is correct:  $R$ is a sample from  $\mu_{G,\lambda}^{(v,1)}$ and $B$ is a sample from $\mu_{G,\lambda}^{(v,0)}$.
\end{lemma}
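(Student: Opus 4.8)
The plan is to verify the two output marginals separately. Since the coupling leaves $R$ untouched, $R\sim\mu_{G,\lambda}^{(v,1)}$ holds by construction, and all the work goes into showing that the modified blue set $\hat B=(B\cap C)\cup(R\setminus C)$ is distributed as $\mu_{G,\lambda}^{(v,0)}$. As a warm-up I would first check that $\hat B$ is a legal configuration: it is independent, because any edge of $G$ joining a vertex of $B\cap C$ to a vertex of $R\setminus C$ would join two vertices of $R\cup B$ lying in different components of $G[R\cup B]$, which is impossible; and $v\notin\hat B$ because $v\in C$ (so $v\notin R\setminus C$) while $v\notin B$ (so $v\notin B\cap C$).

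The conceptual engine is the spatial Markov (screening) property, and the crucial observation is that the outer boundary $\partial C=\{w\notin C:\Gamma_G(w)\cap C\neq\emptyset\}$ is unoccupied in both $R$ and $B$; indeed an occupied boundary vertex would be adjacent to $C$ and hence lie in the same component of $G[R\cup B]$, contradicting the maximality of the component $C$. Consequently, for any fixed connected set $S\ni v$, the event $\{C=S\}$ is exactly the conjunction $\{S\subseteq R\cup B\}\cap\{R\cap\partial S=\emptyset\}\cap\{B\cap\partial S=\emptyset\}$, and since no vertex of $S$ is adjacent to any vertex of $T_S:=V(G)\setminus(S\cup\partial S)$, this event factorises cleanly across the regions $S,\partial S,T_S$ and across the independent pair $R,B$.

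I would then fix a target independent set $J\not\ni v$ and expand $\Pr(\hat B=J)=\sum_S\Pr(C=S,\ \hat B=J)$ over connected $S\ni v$. Using $\hat B\cap S=B\cap S$ and $\hat B\setminus S=R\setminus S$, the constraints pin down $B\cap S=J\cap S$ and $R\setminus S=J\setminus S$, while $S\subseteq R\cup B$ forces $R\cap S\supseteq S\setminus J$ and $B\cap T_S$ stays free. Independence of $R,B$ together with the region factorisation turns each summand into a product of partition functions, and after extracting the factor $\lambda^{|J|}/Z_0$ the entire claim reduces to the single identity $\sum_S W(S,J)\,Z(G[T_S],\lambda)=Z_1$, where $Z_1$ is the partition function restricted to independent sets containing $v$, and $W(S,J)=\sum\lambda^{|R_S|}$ ranges over independent $R_S\subseteq S$ with $R_S\supseteq S\setminus J$.

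The step I expect to be the crux is this last identity, and the realisation that unlocks it is that, although the cluster $C$ a priori depends on both $R$ and $B$, on the relevant event it is completely determined by $R$ and the target $J$: it is precisely the connected component of $v$ in the induced subgraph $G[R\cup J]$. This is because $S\subseteq R\cup B$ with $B\cap S=J\cap S$ forces $S\subseteq R\cup J$, while the empty boundary forces $(R\cup J)\cap\partial S=\emptyset$. With this in hand, the map sending $R$ to the triple $(S,\,R\cap S,\,R\setminus S)$, where $S$ is the component of $v$ in $G[R\cup J]$, is a weight-preserving bijection between independent sets containing $v$ and the admissible triples summed on the left, which proves the identity and hence the lemma. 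I would finally note that claw-freeness is never invoked here: correctness of the coupling holds for an arbitrary graph $G$, the hypothesis being needed only in the next step, where it forces $G[R\cup B]$ to have maximum degree $2$ and thereby controls the size of the cluster~$C$.
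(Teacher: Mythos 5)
Your proposal is correct, and it reaches the lemma by a genuinely more computational route than the paper. The paper's proof conditions on the cluster $C$: since $(R\cap C,B\cap C)$ properly $2$-colours the connected graph $G[C]$ and $v\in R$, the set $C$ alone determines both interior pieces; the proof then observes that the valid extensions of $R\cap C$ and of $B\cap C$ are the same objects (independent sets of $G[V(G)\setminus(C\cup\partial C)]$, by the unoccupied-boundary/screening property), so that, conditionally on $C$, the extensions $R_\mathrm{ext}$ and $B_\mathrm{ext}$ are i.i.d.\ and may be swapped without changing the law --- whence $(B\cap C)\cup(R\setminus C)\sim\mu_{G,\lambda}^{(v,0)}$. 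You instead fix the target configuration $J$, expand $\Pr(\hat B=J)$ over clusters $S$, factorise each term into partition functions (using exactly the same two structural facts: maximality of $C$ forces $\partial C\cap(R\cup B)=\emptyset$, and no edges run from $S$ to $T_S$), and close the computation with a weight-preserving bijection keyed on the observation that the cluster is the component of $v$ in $G[R\cup J]$. What your version buys: it replaces the paper's informal step (``$R_\mathrm{ext}$ is equally valid as an extension of $B_\mathrm{int}$ as $B_\mathrm{ext}$ is'') by an explicit identity with a clean bijective proof, and it never needs the unique-bipartition reconstruction of the interiors from $C$, since carrying $J$ through the computation does that work automatically. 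What the paper's version buys is brevity and a directly probabilistic picture (a conditional exchangeability of extensions) rather than a partition-function calculation. Your closing remark is also accurate and consistent with the paper: claw-freeness (and indeed connectivity of $G$) plays no role in correctness of the coupling --- the paper's own proof never invokes it --- and it is used only afterwards, in Corollary~\ref{cor:clawW}, where it forces the cluster to be a path or cycle and hence bounds $\Ex|C|$.
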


\begin{proof}
Since the independent set $R$ was never modified, it is clearly a sample from $\mu_{G,\lambda}^{(v,1)}$.  

For $B$, we argue as follows.  Think of the pair $R,B$ being selected as follows.  First we select the cluster $C\subseteq V(G)$, as if it had been formed from a pair of independent sets $R,B$ as above.  Since $G[C]$ is a connected bipartite graph, and $v\in R$, we can reconstruct the portion of $R$ and $B$ lying within $C$.  Denote by 
$$
\partial C=\{w\in V(G)\setminus C: \{w,u\}\in E(G)\text{ for some }u\in C\}
$$ 
the boundary of~$C$.   A subset $R_\mathrm{ext}\subseteq V(G)\setminus C$ extends $R_\mathrm{int}=R\cap C$ to an independent set $R=R_\mathrm{int}\cup R_\mathrm{ext}$ on the whole of $V(G)$ if and only if $R_\mathrm{ext}$ is an independent set of the graph $G[V(G)\setminus(C\cup\partial C)]$.  (A vertex in $R_\mathrm{ext}$ certainly cannot be adjacent to a red vertex in the cluster $C$, and it cannot be adjacent to a blue vertex by maximality of $C$.)  Exactly the same is true of extensions $B_\mathrm{ext}$ of $B_\mathrm{int}=B\cap C$.  In short, the set of possible extensions to $B_\mathrm{int}$ and $R_\mathrm{int}$ are identical. Informally, $R_\mathrm{ext}$ is equally valid as an extension of $B_\mathrm{int}$ as $B_\mathrm{ext}$ is.  Thus $B=B_\mathrm{int}\cup R_\mathrm{ext}$ is a true sample from $\mu_{G,\lambda}^{(v,0)}$.
\end{proof} 

\begin{algorithm}
\caption {Growing a red-blue cluster by Breadth-First Search (BFS).}\label{alg:BFS}
\begin{algorithmic}
\STATE {$\textsf{Cluster BFS}(v)$}
\COMMENT{$R$ and $B$ are random independent sets, revealed as the algorithm progresses.}
\STATE {$L_0 \leftarrow \{v\}$}
\STATE {$U\leftarrow V(G)\setminus \{v\}$}
\COMMENT {the set of unprocessed vertices}
\STATE {$d\leftarrow0$}
\COMMENT {current depth of search}
\WHILE {$L_d\not=\emptyset$}
\STATE {$N\leftarrow U\cap\partial L_d$} 
\COMMENT {$\partial L_d=\{u\in V(G)\setminus L_d:\{u,w\}\in E(G)\text{ for some }w\in L_d\}$}
\IF {$d$ is even}
        \STATE {reveal the blue vertices within $N$, and set $L_{d+1}\leftarrow N\cap B$}
\ELSE    
        \STATE {reveal the red vertices within $N$, and set $L_{d+1}\leftarrow N\cap R$}
\ENDIF
\STATE {$U \leftarrow U\setminus N$}
\STATE {$d\leftarrow d+1$}
\ENDWHILE
\STATE {$d\leftarrow d-1$}
\COMMENT {The red-blue cluster containing $v$ is $C=L_0\cup L_1\cup\cdots\cup L_d$}
\end{algorithmic}
\end{algorithm}

Using this coupling we can rederive the result of Chen and Gu.

\begin{corollary}\label{cor:clawW}
Suppose that $\Delta\geq3$ and $\lambda>0$. If $G$ is a claw-free graph of maximum degree $\Delta$ and $v\in V(G)$, then $W_1(\mu_{G,\lambda}^{(v,0)},\mu_{G,\lambda}^{(v,1)})=O(1)$.
\end{corollary}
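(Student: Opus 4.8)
The plan is to run the van den Berg--Brouwer coupling of Figure~\ref{fig:couple}, which by Lemma~\ref{lem:couple} is a legitimate coupling of $\mu_{G,\lambda}^{(v,1)}$ and $\mu_{G,\lambda}^{(v,0)}$. After the final reassignment $B\leftarrow(B\cap C)\cup(R\setminus C)$, the sets $R$ and $B$ agree on every vertex outside the cluster $C$, so all disagreements lie within $C$ and the Hamming distance between the associated spin configurations $\sigma_R,\sigma_B$ is at most $|C|$. Since $W_1$ is an infimum over couplings, this particular coupling gives
\[
W_1\big(\mu_{G,\lambda}^{(v,0)},\mu_{G,\lambda}^{(v,1)}\big)\leq \Ex\big[d_\mathrm{H}(\sigma_R,\sigma_B)\big]\leq \Ex\big[|C|\big].
\]
The whole task thus reduces to showing $\Ex[|C|]=O(1)$, with the implied constant depending only on $\Delta$ and $\lambda$.

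First I would establish the structural consequence of claw-freeness for the tree grown by Algorithm~\ref{alg:BFS}. As $R$ and $B$ are independent sets, every edge of $G[R\cup B]$ joins $R\setminus B$ to $B\setminus R$ (and vertices of $R\cap B$ are isolated), so $G[R\cup B]$ is bipartite and the layers $L_0,L_1,\dots$ alternate between ``red'' vertices of $R$ and ``blue'' vertices of $B$, with $v\in L_0$ red (as $v\in R\setminus B$). Now consider a red vertex $w$: its blue neighbours all lie in the independent set $B$, hence are pairwise non-adjacent, so if there were three of them they would induce a claw centred at $w$. Thus $w$ has at most two blue neighbours; and if $w$ is not the root, one of these is its BFS-parent, leaving at most one child. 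The symmetric argument applies to blue vertices. Consequently the BFS tree is the root $v$ (with at most two children) followed by paths (each non-root vertex having at most one child), so $C$ is the union of $v$ and at most two alternating paths, giving $|C|\leq 1+\ell_1+\ell_2$, where $\ell_1,\ell_2$ are the path lengths.

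The quantitative heart of the argument is to show that each such path has $O(1)$ expected length. I would argue that, conditioned on the entire history revealed by the BFS up to the moment a red vertex $w$ is processed, the probability that $w$ has at least one unprocessed blue neighbour --- i.e.\ that its path extends --- is bounded away from $1$. Writing $p=\lambda/(1+\lambda)$, the key point is that conditioning $\mu_{G,\lambda}^{(v,0)}$ on the already-revealed $B$-values is a pinning, and so leaves the law of $B$ on the as-yet-unrevealed vertices equal to a hard-core distribution on an induced subgraph of $G$ of maximum degree at most $\Delta$. By Lemma~\ref{lem:sd} applied to that subgraph, this conditional law is stochastically dominated by the product Bernoulli$(p)$ measure; since ``no unprocessed neighbour of $w$ is blue'' is a decreasing event, its conditional probability is at least $(1-p)^{\Delta}$. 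Hence each extension step succeeds with conditional probability at most $q=1-(1-p)^{\Delta}<1$, each path length is stochastically dominated by a geometric random variable, and $\Ex[\ell_i]\leq (1-p)^{-\Delta}=(1+\lambda)^{\Delta}$. Combining, $\Ex[|C|]\leq 1+2(1+\lambda)^{\Delta}=O(1)$, as required.

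I expect the main obstacle to be this last step: rigorously justifying the conditional stochastic-domination bound. One must check that conditioning on the adaptively revealed BFS history --- a random set of vertices together with their $B$-values --- genuinely leaves a pinned hard-core distribution to which Lemma~\ref{lem:sd} applies, and that the direction of domination yields a \emph{lower} bound of $(1-p)^{\Delta}$ on the probability of the decreasing event ``$w$ has no blue neighbour''. By contrast, the role of claw-freeness --- capping the branching so that $C$ degenerates into at most two paths rather than a genuinely branching (and, for large $\lambda$, supercritical) process --- is structurally clean, and the reduction from $W_1$ to $\Ex[|C|]$ is routine.
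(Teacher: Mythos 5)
Your proposal is correct and takes essentially the same route as the paper: the van den Berg--Brouwer coupling with the reduction to $\Ex|C|$, claw-freeness forcing the cluster to be linear (the paper states that a connected bipartite claw-free graph is a path or a cycle, which is equivalent to your branching bound on the BFS tree of Algorithm~\ref{alg:BFS}), and stochastic domination via Lemma~\ref{lem:sd} giving a geometric bound on the cluster's growth. The adaptive-conditioning issue you flag is resolved exactly by the pinning observation you sketch (for the hard-core model, conditioning on revealed values yields a hard-core distribution on an induced subgraph, to which Lemma~\ref{lem:sd} applies), and the paper treats this point with no more detail than you do, so there is no gap.
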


\begin{proof}
The claimed bound on $W_1$ is evidenced by the coupling just analysed.   As $R\oplus B=C$, we just have to demonstrate that $|C|$ is small in expectation.  Note that $G[C]$ is bipartite. It is straightforward to see that a connected bipartite claw-free graph can have no vertices of degree 3 or greater, so is either a path or a cycle.   As paths and cycles are one-dimensional structures, we do not expect them to propagate far, and this is indeed the case.  Imagine that we reveal the colours of vertices in a breadth-first fashion, starting with $v$ at time~0.  The breadth-first search is presented as Algorithm~\ref{alg:BFS}.  This is a little heavy for current purposes, but its generality will be useful later.  As the cluster is either a path or a cycle, we have $|L_j|\leq2$ for all levels~$j$. Since the hard core distribution is dominated by the product of independent Bernoullli distributions (Lemma~\ref{lem:sd}), on each iteration, with probability at least $p=(1/(\lambda+1))^{2\Delta}$, the breadth-first search will halt.  So $\Ex|C|=\Ex[|R\oplus B|]$ is bounded above by the expectation of a random variable having a geometric distribution with success probability $(\lambda+1)^{-2\Delta}$, which is $(\lambda+1)^{2\Delta}$.
\end{proof}

Then, by Corollary~\ref{cor:mixing} we obtain:

\begin{corollary}\label{cor:clawmixing}
For any $\Delta\geq3$ and $\lambda>0$, Glauber dynamics at fugacity $\lambda$ has optimal (i.e., $O(n\log n)$) mixing time on the class of claw-free graphs of maximum degree~$\Delta$.
\end{corollary}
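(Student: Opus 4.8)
The plan is to verify that the two hypotheses of Corollary~\ref{cor:mixing} are met by the class of claw-free graphs of maximum degree~$\Delta$, and then simply invoke that corollary. All the substantive work has already been done in Corollaries \ref{cor:clawW} and \ref{cor:mixing}; what remains is a short check of the premisses.

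First I would confirm that the class is hereditary and respects the degree bound. If $G$ is claw-free of maximum degree~$\Delta$ and $G'$ is an induced subgraph of~$G$, then any induced claw in~$G'$ would also be an induced claw in~$G$, so $G'$ is claw-free; and passing to an induced subgraph cannot raise the maximum degree, so $G'$ again has maximum degree at most~$\Delta$. Hence the class $\calG$ of claw-free graphs of maximum degree~$\Delta$ is hereditary with degree bound~$\Delta$, exactly as Corollary~\ref{cor:mixing} requires.

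Second I would supply the uniform Wasserstein bound. Corollary~\ref{cor:clawW} asserts $W_1(\mu_{G,\lambda}^{(v,0)},\mu_{G,\lambda}^{(v,1)})=O(1)$ for every claw-free $G$ of maximum degree~$\Delta$ and every $v\in V(G)$. The point that needs care is that this bound is \emph{uniform}: reading off the proof, the expected cluster size, and hence $W_1$, is at most $(\lambda+1)^{2\Delta}$, a quantity depending only on $\lambda$ and~$\Delta$ and not on $G$, $v$ or~$n$. So one may take $\eta=(\lambda+1)^{2\Delta}$ as the constant demanded by Corollary~\ref{cor:mixing}. (A minor wrinkle is that Corollary~\ref{cor:clawW} does not insist that $G$ be connected, whereas the coupling of Figure~\ref{fig:couple} was stated for connected graphs; but since the symmetric difference $R\oplus B$ lies entirely within the connected component of~$v$, the bound is unaffected by any other components.)

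With both premisses established, Corollary~\ref{cor:mixing} immediately yields that Glauber dynamics at fugacity~$\lambda$ on graphs in~$\calG$ mixes in $O(n\log n)$ time, the implied constant being a function of $\lambda$ and~$\Delta$. I expect no genuine obstacle at this stage: the difficulty has been front-loaded into the construction and analysis of the van den Berg--Brouwer coupling (Corollary~\ref{cor:clawW}) and into Chen and Gu's packaging of the spectral-independence machinery (Corollary~\ref{cor:mixing}). The only thing one must be careful about is confirming that the $O(1)$ bound is genuinely uniform over the whole class, which it is.
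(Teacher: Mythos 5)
Your proposal is correct and follows exactly the paper's route: the paper likewise obtains this corollary by feeding the uniform Wasserstein bound of Corollary~\ref{cor:clawW} into the spectral-independence packaging of Corollary~\ref{cor:mixing}. Your extra checks (hereditariness of the class, uniformity of the constant $\eta=(\lambda+1)^{2\Delta}$, and the remark about disconnected graphs) are all sound and simply make explicit what the paper leaves implicit.
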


Let's make some modest steps towards excluding larger subdivided claws.\footnote{The remainder of the section is not essential to the technical development of the main result, Theorem~\ref{thm:main}, relating to general subdivided claws.  However, it contains some contextual material on graphs with excluded subdivided claws, and also motivates the approach taken in Section~\ref{sec:general}.}  Fork-free graphs were characterised by Alekseev~\cite{Alekseev}.  See also Bencs~\cite{BencsPhD}. A connected bipartite fork-free graph is either a path, a cycle, or a complete bipartite graph minus a (possibly empty) matching.  Since the last of these can have size at most $2(\Delta+1)$, the argument just given in the Proof of Lemma~\ref{lem:couple} for claw-free graphs carries over to fork-free graphs, as the expected size of a cluster is still constant.  However, as we already saw in Corollary~\ref{cor:finitevariation}, this generalisation is illusory.

\begin{figure}
\begin{tikzpicture}[scale=0.5]
    \foreach \i in {0,...,2} {
        \foreach \j in {0,...,1} {
            \draw (4*\i,4*\j) node [vertex] (\i A\j) {};
            \draw (4*\i+1,4*\j+1) node [vertex] (\i B\j) {};
            \draw (4*\i+2,4*\j) node [vertex] (\i C\j) {};
            \draw (4*\i+1,4*\j-1) node [vertex] (\i D\j) {};

            \draw (\i A\j) -- (\i B\j) -- (\i C\j) -- (\i D\j) -- (\i A\j);
            \draw (\i A\j) -- (\i C\j);
            \draw (\i B\j) -- (\i D\j);
  
            \draw (4*\i+3,4*\j+1) node [vertex] (\i E\j) {};
            \draw (4*\i+4,4*\j) node [vertex] (\i F\j) {};
            \draw (4*\i+3,4*\j-1) node [vertex] (\i G\j) {};
            
            \draw (\i C\j) -- (\i E\j) -- (\i F\j) -- (\i G\j) -- (\i C\j);

            \draw (4*\i+0,4*\j+2) node [vertex] (\i H\j) {};
            \draw (4*\i+1,4*\j+3) node [vertex] (\i I\j) {};
            \draw (4*\i+2,4*\j+2) node [vertex] (\i J\j) {};
            
            \draw (\i B\j) -- (\i H\j) -- (\i I\j) -- (\i J\j) -- (\i B\j);
           }
    }
    
    \draw (1,9) node {$\vdots$};
    \draw (5,9) node {$\vdots$};
    \draw (9,9) node {$\vdots$};
    
    \draw (14,0) node {$\dots$};
    \draw (14,4) node {$\dots$};
    
    \draw [color=red, thick] (1A1) -- (1C1);
    \draw [color=red, thick] (1C1) -- (1E1);
    \draw [color=red, thick] (1C1) -- (1G1);
    \draw [color=red, thick] (0G1) -- (0F1);

\end{tikzpicture}
\caption{An (infinite family of) E-free but not fork-free graph(s).}
\label{fig:E-free}
\end{figure}

In light of Corollary~\ref{cor:finitevariation}, the first interesting case beyond claw free is E-free (i.e., $S_{1,2,2}$-free).   Figure~\ref{fig:E-free} illustrates an infinite family of E-free graphs which are not claw free.   Indeed, by iterating the basic building block either horizontally or vertically, or both, we can appreciate that the family includes graphs that contain induced copies of $S_{1,1,t}$ for arbitrarily large~$t$.  (An instance of a fork is picked out in red in Figure~\ref{fig:E-free}.) So, by excluding E, we have escaped from the constraints of Corollary~\ref{cor:finitevariation} into new, apparently interesting families of graphs. 

It is worth recording the fact that there is no analogue of Corollary~\ref{cor:finitevariation} for the sequence of $S_{1,2,t}$-free graph classes.  For example, suppose we construct a graph~$G$ as a disjoint union of an E together with a path of arbitrary length, with endpoints $a$ and $b$, and then add edges from $a$ to all vertices in the~E.  We claim that $G$ is skew-star-free (i.e., $S_{1,2,3}$-free).  Suppose to the contrary that $G$ contains an induced skew-star. This induced graph would have to include vertex $a$ together with at least one edge $e$ from the E.  But the endpoints of~$e$ together with~$a$ induce a triangle, which is a contradiction.     So there are infinitely many skew-star free graphs of degree at most 7 that are not E-free.

At this point we skip the class of E-free graphs and proceed directly to the larger class of skew-star-free graphs as these have been analysed in detail by Lozin~\cite{LozinSkew} in his `Decomposition Theorem'.  He showed that a connected bipartite skew-star-free graph is of one of four kinds.  Three of these are of bounded size, while the fourth is composed of blow-ups of paths and cycles.  (A blow-up of a graph~$H$ is obtained by replacing each vertex of~$H$ by an independent set, and each edge by a complete bipartite graph connecting the corresponding independent sets.) As the unbounded components are still `linear', we continue to have spectral independence and optimal mixing of Glauber dynamics.  The only change to the proof of Corollary~\ref{cor:clawW} lies in the analysis of the level sets $L_i$ generated by the breadth-first search:  we now have $|L_i|\leq2(\Delta-1)$ rather than $|L_j|\leq2$.

\begin{corollary}
Suppose that $\Delta\geq3$ and $\lambda>0$. If $G$ is a skew-star-free graph of maximum degree $\Delta$ and $v\in V(G)$, then $W_1(\mu_{G,\lambda}^{(v,0)},\mu_{G,\lambda}^{(v,1)})=O(1)$.
\end{corollary}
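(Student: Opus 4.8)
The plan is to reuse the van den Berg--Brouwer coupling of Figure~\ref{fig:couple} verbatim, together with Lemma~\ref{lem:couple}, whose correctness argument makes no use of claw-freeness and so applies to any connected graph.  As in the proof of Corollary~\ref{cor:clawW}, the Wasserstein distance is bounded by $\Ex|C|=\Ex|R\oplus B|$, where $C$ is the red-blue cluster containing~$v$.  Since $C$ induces a \emph{connected bipartite} subgraph of the skew-star-free graph~$G$, Lozin's Decomposition Theorem classifies $G[C]$ into four kinds: three of bounded size, and a fourth consisting of blow-ups of paths and cycles.  The essential point is that all four kinds are still essentially one-dimensional, so a breadth-first exploration of $C$ cannot fan out.

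The key step is to run the breadth-first search of Algorithm~\ref{alg:BFS} and bound the level sizes $|L_i|$.  For the three bounded kinds there is nothing to prove, as $|C|$ is already $O(1)$.  For the blow-up case, I would argue that each level~$L_i$ lies within the blow-up of a single vertex (or a bounded number of adjacent vertices) of the underlying path or cycle, and hence $|L_i|\leq 2(\Delta-1)$: the degree bound~$\Delta$ caps the size of each independent set replacing a vertex in the blow-up, and the one-dimensional underlying structure means only a bounded number of such blow-up classes can be adjacent to the current level.  This replaces the bound $|L_j|\leq2$ used in Corollary~\ref{cor:clawW} by $|L_i|\leq2(\Delta-1)$, as flagged in the surrounding text.

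Once the level sizes are bounded by the constant $2(\Delta-1)$, the stochastic-domination argument of Corollary~\ref{cor:clawW} carries over unchanged.  Each vertex in $L_i$ has at most $\Delta$ neighbours, so the candidate set $N$ feeding the next level has size at most $2(\Delta-1)\Delta$; by Lemma~\ref{lem:sd} the probability that none of these candidate vertices receives the relevant colour is bounded below by a positive constant $p=(\lambda+1)^{-2(\Delta-1)\Delta}$, independent of the history.  Thus the depth of the search is stochastically dominated by a geometric random variable, $|C|$ is dominated by a constant times that depth, and $\Ex|C|=O(1)$ with the constant depending only on $\lambda$ and~$\Delta$.  This yields $W_1\big(\mu_{G,\lambda}^{(v,0)},\mu_{G,\lambda}^{(v,1)}\big)=O(1)$.

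I expect the main obstacle to be the second step: extracting from Lozin's theorem a clean, self-contained bound on $|L_i|$ that is uniform over the blow-up structure.  One must verify that the blow-up of a path or cycle really does confine each breadth-first level to a bounded number of blow-up classes — in a blow-up the neighbourhood of an independent set can spread to both adjacent classes, so care is needed to check that the exploration advances essentially monotonically along the underlying path or cycle rather than spreading laterally.  Making the geometric-halting bound rigorous in the presence of these wider levels is routine once the level sizes are controlled, so the crux is purely the structural bookkeeping supplied by the Decomposition Theorem.
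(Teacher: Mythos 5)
Your proposal is correct and follows essentially the same route as the paper: the paper likewise reuses the van den Berg--Brouwer coupling (whose correctness argument in Lemma~\ref{lem:couple} never uses claw-freeness), invokes Lozin's Decomposition Theorem to conclude that the bipartite cluster is either of bounded size or a blow-up of a path or cycle, and replaces the level bound $|L_j|\le 2$ from Corollary~\ref{cor:clawW} by $|L_i|\le 2(\Delta-1)$ before running the same geometric-halting argument. The structural bookkeeping you flag as the crux is treated just as tersely in the paper itself, so there is nothing to reconcile.
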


\begin{corollary}
For any $\Delta\geq3$ and $\lambda>0$, Glauber dynamics at fugacity $\lambda$ has optimal (i.e., $O(n\log n)$) mixing time on the class of skew-star-free (and hence E-free) graphs of maximum degree~$\Delta$.
\end{corollary}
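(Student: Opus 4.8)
The plan is to read this off directly from the two ingredients already assembled: the uniform Wasserstein estimate in the preceding corollary, and the general optimal-mixing criterion packaged in Corollary~\ref{cor:mixing}. First I would check that the hypotheses of Corollary~\ref{cor:mixing} are met. That corollary asks for a \emph{hereditary} class $\calG$ with a degree bound $\Delta$ and a fugacity $\lambda>0$. The class of skew-star-free graphs of maximum degree~$\Delta$ is cut out by excluding a single induced subgraph ($S_{1,2,3}$) together with the degree bound, and both conditions are inherited by induced subgraphs; hence the class is hereditary, as required.

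The preceding corollary supplies exactly the remaining input: a single constant $\eta=O(1)$, depending only on $\Delta$ and $\lambda$, such that $W_1(\mu_{G,\lambda}^{(v,0)},\mu_{G,\lambda}^{(v,1)})\leq\eta$ for every graph $G$ in the class and every vertex $v\in V(G)$. Feeding this bound into Corollary~\ref{cor:mixing} with this value of $\eta$ immediately yields $t_\mathrm{mix}=O(n\log n)$, with the implied constant depending only on $\lambda$, $\Delta$ and $\eta$, and hence ultimately only on $\lambda$ and $\Delta$.

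For the parenthetical ``(and hence E-free)'' I would argue that no separate work is needed, because every E-free graph is already skew-star-free. Indeed, deleting the endpoint of the longest arm of $S_{1,2,3}$ (the skew star) leaves arms of lengths $1,2,2$, namely $S_{1,2,2}$ (the E), so the E is an induced subgraph of the skew star. Consequently any graph containing an induced skew star also contains an induced E; contrapositively, the E-free graphs form a subclass of the skew-star-free graphs, and the mixing bound descends to them automatically. (This is consistent with the earlier remark that there exist skew-star-free graphs that are not E-free: the containment is strict.)

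There is essentially no obstacle remaining at this stage, since all the genuine work has been discharged in the preceding corollary, where the coupling of Figure~\ref{fig:couple}, Lozin's decomposition of connected bipartite skew-star-free graphs into blow-ups of paths and cycles, the resulting level-set bound $|L_i|\leq 2(\Delta-1)$, and the geometric tail bound on $|C|=|R\oplus B|$ are combined. The one point I would be careful to confirm is that $\eta$ is genuinely uniform over the whole class, i.e.\ that it does not covertly depend on $|V(G)|$; but this is clear, because the geometric success probability governing the cluster-growth halting is a function of $\lambda$ and $\Delta$ alone.
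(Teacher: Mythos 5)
Your proposal is correct and follows essentially the same route as the paper, which (implicitly, as in the claw-free case) obtains this corollary by feeding the uniform $W_1=O(1)$ bound of the preceding corollary into the hereditary-class criterion of Corollary~\ref{cor:mixing}. Your additional verification that E-free graphs form a subclass of skew-star-free graphs (since the E is an induced subgraph of the skew star) correctly justifies the parenthetical, matching the paper's remark that skew-star-free is the larger class.
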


As we saw earlier, there is no direct analogue of Lemma~\ref{lem:S111impliesS11t} with $S_{1,2,2}$ replacing $S_{1,1,1}$, and $S_{1,2,t}$ replacing $S_{1,1,t}$.  But if we add the condition of bipartiteness we do have:

\begin{lemma}\label{lem:2tot}
Suppose that $G$ is a connected bipartite graph of maximum degree $\Delta$, and that $t\geq3$.  If $|V(G)|>\vol(\Delta,t+2)$ and $G$ contains an induced $S_{1,2,2}$, then $G$ contains an induced $S_{1,2,t}$.
\end{lemma}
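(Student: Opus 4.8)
The plan is to mirror the proof of Lemma~\ref{lem:S111impliesS11t}, while using bipartiteness to tame what would otherwise be an unwieldy case analysis. Write the given induced $S_{1,2,2}$ as a centre $v$ carrying a length-1 arm $v\,a$ and two length-2 arms $v\,b_1\,b_2$ and $v\,c_1\,c_2$. Every vertex of this E lies within distance $2$ of $v$, and $\vol(\Delta,t+2)$ bounds the number of vertices within distance $t+2$ of $v$; so the hypothesis $|V(G)|>\vol(\Delta,t+2)$ yields a vertex $w$ with $d(v,w)\geq t+3$, whence $d(\{v,a,b_1,b_2,c_1,c_2\},w)\geq t+1$. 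Let $P=(w_0,w_1,\dots,w_s)$ be a shortest path from the E to $w$, so that $w_0$ is an E-vertex, $w_s=w$, and $s\geq t+1$. Exactly as in Lemma~\ref{lem:S111impliesS11t}, minimality makes $P$ chordless, forces $w_1,\dots,w_s$ to lie outside the E, and ensures that no $w_i$ with $i\geq2$ has a neighbour in the E. Hence the only interface between $P$ and the E is the non-empty set $A$ of E-vertices adjacent to $w_1$ (note $w_0\in A$).

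The crucial simplification comes from bipartiteness. In the unique $2$-colouring of the connected bipartite graph $G$, the E splits by parity of distance from $v$ into the classes $\{v,b_2,c_2\}$ and $\{a,b_1,c_1\}$. Since $w_1$ is adjacent only to vertices of the colour opposite to its own, $A$ is contained entirely in one of these two classes. This is precisely the property that fails in general: the skew-star-free graph constructed earlier in this section owes its existence to a triangle, that is, to a vertex adjacent to two E-vertices of different parities. Here no such triangle is available, so $A$ is monochromatic, and this is what cuts the problem down to a manageable size.

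It then remains to assemble an induced $S_{1,2,t}$ from the E together with a prefix $w_1,w_2,\dots$ of $P$, with $P$ supplying the unique long arm. I would split on $|A|$. When $|A|=1$ I root the $S_{1,2,t}$ at $v$: the single attachment vertex dictates which arm (or the leaf $a$, or $v$ itself) is extended along $P$ into the length-$t$ arm, and the two surviving parts of the E furnish the required length-1 and length-2 arms. For instance, if $A=\{b_1\}$ I take arms $v\,a$, $v\,c_1\,c_2$, and $v\,b_1\,w_1\cdots w_{t-1}$, discarding $b_2$. When $|A|\geq2$ I instead root the $S_{1,2,t}$ at $w_1$: because $A$ is monochromatic with at least two elements, I can choose one element $x\in A$ together with a neighbour $x'$ of $x$ in the E (other than $w_1$) to form a length-2 arm $w_1\,x\,x'$, a second element of $A$ to start the length-1 arm, and the tail $w_1\,w_2\cdots w_{t+1}$ as the length-$t$ arm, discarding $v$ and every unused E-vertex. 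In every case the chordlessness of $P$, the fact that $w_i$ ($i\geq2$) avoids the E, and the monochromaticity of $A$ together guarantee that no extra edges appear, so the chosen vertices induce exactly $S_{1,2,t}$; the bound $s\geq t+1$ is precisely what the longest arm consumes.

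The main obstacle is the bookkeeping in the $|A|\geq2$ cases: one must check that the length-1 and length-2 arms can be realised simultaneously, sharing no vertex and creating no chord. The only genuinely delicate subcase is $v\in A$ with $|A|=2$, where using $v$ as a leaf would force discarding a vertex needed by the length-2 arm; one must instead route the length-2 arm through $v$ (as $w_1\,v\,a$). I expect to settle all of this by running through the few representative subsets of each colour class, reducing by the $b\leftrightarrow c$ symmetry exactly as in Lemma~\ref{lem:S111impliesS11t}; the verification is routine but relies on monochromaticity at each step.
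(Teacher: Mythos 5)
Your proposal is correct and is precisely the argument the paper has in mind: the paper in fact omits the proof of Lemma~\ref{lem:2tot}, remarking only that it ``may be proved in a similar way to Lemma~\ref{lem:S111impliesS11t}'', and your write-up carries out exactly that plan. The one genuinely new ingredient beyond Lemma~\ref{lem:S111impliesS11t} --- that bipartiteness forces the attachment set $A$ into a single colour class, which both forbids edges inside $A$ and guarantees that the second vertex of any length-2 arm grown from $w_1$ is non-adjacent to $w_1$ --- is identified correctly, and your case split ($|A|=1$ rooted at $v$; $|A|\geq2$ rooted at $w_1$, with the delicate subcase $v\in A$, $|A|=2$ rerouted through the arm $w_1\,v\,a$) does survive an exhaustive check of all attachment sets in both colour classes, with the bound $s\geq t+1$ sufficing in every case.
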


This may be proved in a similar way to Lemma~\ref{lem:S111impliesS11t}, but we omit the proof as it is not on the direct line to the main result.

\begin{corollary}
For any $\Delta\geq3$, $\lambda>0$ and $t\geq2$, Glauber dynamics at fugacity $\lambda$ has optimal (i.e., $O(n\log n)$) mixing time on the class of $S_{1,2,t}$-free graphs of maximum degree~$\Delta$.
\end{corollary}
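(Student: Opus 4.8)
The plan is to reuse the van den Berg--Brouwer coupling of Figure~\ref{fig:couple} and the breadth-first exploration of Algorithm~\ref{alg:BFS}, exactly as in the proof of Corollary~\ref{cor:clawW}, the only new ingredient being a uniform bound on the sizes of the level sets $L_i$ produced by the search. First I would dispose of the small cases. Since $S_{1,2,2}$ is an induced subgraph of $S_{1,2,3}$, every $S_{1,2,2}$-free graph is $S_{1,2,3}$-free; hence for $t\in\{2,3\}$ the class of $S_{1,2,t}$-free graphs of maximum degree~$\Delta$ is contained in the class of skew-star-free graphs, and optimal mixing is already given by the skew-star-free case proved above. It therefore remains to treat $t\ge4$, which is precisely the range in which Lemma~\ref{lem:2tot} is available.

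Fix $t\ge4$, a connected $S_{1,2,t}$-free graph $G$ of maximum degree~$\Delta$ and a vertex $v$, and run the coupling. As in the proof of Corollary~\ref{cor:clawW}, the cluster $C$ satisfies $R\oplus B=C$, the induced graph $G[C]$ is a connected bipartite component of $G[R\cup B]$, and the level sets $L_0,L_1,\dots$ of the search are precisely the distance classes of $G[C]$ from~$v$. The key observation is that $G[C]$, being an induced subgraph of~$G$, is itself $S_{1,2,t}$-free, connected, bipartite and of maximum degree~$\Delta$, so Lemma~\ref{lem:2tot} applies to it. I would then argue by the following dichotomy. Either $|C|\le\vol(\Delta,t+2)$, in which case every level set trivially satisfies $|L_i|\le\vol(\Delta,t+2)$; or $|C|>\vol(\Delta,t+2)$, in which case the contrapositive of Lemma~\ref{lem:2tot} forces $G[C]$ to contain no induced $S_{1,2,2}$. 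An $S_{1,2,2}$-free graph is $S_{1,2,3}$-free, so in this second case $G[C]$ is a connected bipartite skew-star-free graph of maximum degree~$\Delta$, and Lozin's decomposition applies exactly as in the skew-star-free case above, yielding $|L_i|\le2(\Delta-1)$. Either way, every level set satisfies $|L_i|\le K$ for the constant $K=\max\{\vol(\Delta,t+2),\,2(\Delta-1)\}$, which depends only on $\Delta$ and~$t$.

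With this uniform level bound in hand, the remainder is identical to Corollary~\ref{cor:clawW}. At each step of the search the candidate set $N=U\cap\partial L_d$ has size at most $\Delta|L_d|\le\Delta K$; since the hard-core distribution is stochastically dominated by the product Bernoulli measure (Lemma~\ref{lem:sd}), conditioned on everything revealed so far the probability that no vertex of~$N$ receives the relevant colour, and hence that the search halts, is at least $(1+\lambda)^{-\Delta K}$. Thus the number of non-empty levels is dominated by a geometric random variable, giving $\Ex|C|=\Ex|R\oplus B|=O(1)$ uniformly in $G$ and~$v$. Consequently $W_1(\mu_{G,\lambda}^{(v,0)},\mu_{G,\lambda}^{(v,1)})\le\Ex|C|=O(1)$, and feeding this into Corollary~\ref{cor:mixing} (the class of bounded-degree $S_{1,2,t}$-free graphs being hereditary) yields the claimed $O(n\log n)$ mixing time.

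The step I expect to be the main obstacle is the dichotomy in the second paragraph, and more precisely the fact that it must be applied cluster-by-cluster rather than globally. As the discussion preceding Lemma~\ref{lem:2tot} records, there is no finite-variation reduction of $S_{1,2,t}$-free graphs to skew-star-free graphs, so one cannot simply invoke the earlier corollary for the whole of~$G$. What rescues the argument is that the relevant object is not $G$ itself but the cluster $G[C]$, which is automatically bipartite; it is precisely this bipartiteness that activates the extra hypothesis of Lemma~\ref{lem:2tot} and lets large clusters be forced into the E-free, and hence linear, regime. The remaining points need only routine care: that $G[C]$ is genuinely an induced subgraph (so that $S_{1,2,t}$-freeness is inherited and the level sets of $G[C]$ coincide with those of the search), and that the stochastic-domination bound survives the conditioning on the partially revealed configuration---both already exploited in Corollary~\ref{cor:clawW}.
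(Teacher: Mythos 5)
Your proof is correct and takes essentially the same route as the paper's sketch proof: the cluster is bipartite, so by the contrapositive of Lemma~\ref{lem:2tot} together with Lozin's decomposition it either has bounded size or bounded level sets, whence the breadth-first search of Algorithm~\ref{alg:BFS} halts geometrically fast, giving $W_1\big(\mu_{G,\lambda}^{(v,0)},\mu_{G,\lambda}^{(v,1)}\big)=O(1)$ and, via Corollary~\ref{cor:mixing}, optimal mixing. Your separate treatment of $t\in\{2,3\}$ by containment in the skew-star-free class is a worthwhile added detail, since Lemma~\ref{lem:2tot} is only stated for $t\geq3$ and the paper's sketch leaves this case implicit.
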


\begin{proof}[Sketch proof]
The cluster containing $v$ is bipartite, and hence, by Lemma~\ref{lem:2tot} and Lozin's decomposition theorem, is either of bounded size or is the blow-up of a path or cycle.  The layers $L_i$ created by the breadth-first search of Algorithm~\ref{alg:BFS} are of bounded size, so the depth of the search is again bounded by a geometric random variable.
\end{proof} 

We have seen that in the case of $S_{1,1,t}$-free graphs and $S_{1,2,t}$-free graphs the cluster generated by Algorithm~\ref{alg:BFS} is either of bounded size, or is `linear' (in this case a blow-up of either a path or a cycle).  We may suspect that this remains true for graphs excluding larger subdivided stars.  While this is morally true, it is difficult to provide a suitable definition of `linear' that meshes with the coupling argument.  We therefore slightly change our approach.

\section{$S_{2,2,2}$-free graphs and beyond}\label{sec:general}

Instead of trying to analyse the structure of bounded-degree bipartite graphs excluding a large subdivided claw, we will analyse the BFS procedure of Algorithm~\ref{alg:BFS} directly. Recall the layers $L_i$ that are produced by the search.  The key observation is that these layers do not grow large. 

\begin{lemma}
If $G$ is $S_{t,t,t}$-free then $|L_i|\leq 2\vol(\Delta,2t)$, for all $i\in\mathbb{N}$ for which $L_i$ is defined. 
\end{lemma}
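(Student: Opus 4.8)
The plan is to prove the contrapositive: assuming some layer satisfies $|L_i|>2\vol(\Delta,2t)$, I will exhibit an induced copy of $S_{t,t,t}$, contradicting $S_{t,t,t}$-freeness. The first step is a packing argument. Since the ball of radius $2t$ around any vertex contains at most $\vol(\Delta,2t)$ vertices, a greedy selection produces three vertices $x_1,x_2,x_3\in L_i$ that are pairwise at distance greater than $2t$ in $G$: pick $x_1$, discard the at most $\vol(\Delta,2t)$ vertices within distance $2t$, pick $x_2$ from what remains, discard its ball, and pick $x_3$. The hypothesis $|L_i|>2\vol(\Delta,2t)$ is exactly what guarantees that a third choice survives the removal of two balls, which is why the bound takes this form.

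Next I would record the structural features to be used. All three chosen vertices lie in the cluster $C$, and $G[C]$ is connected and bipartite, the layers $L_j$ being the distance classes from $v$; thus every edge of $G[C]$ joins consecutive layers and $G[C]$ contains no triangle. I would work throughout in the metric of $G[C]$, noting that cluster distances dominate those in $G$, so the three vertices remain pairwise more than $2t$ apart. For the construction, let $m$ be a metric median of $\{x_1,x_2,x_3\}$ in $G[C]$ (a vertex minimising $d(\cdot,x_1)+d(\cdot,x_2)+d(\cdot,x_3)$), and let $\gamma_k$ be a geodesic from $m$ to $x_k$, of length $\rho_k=d(m,x_k)$. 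The claim is that $\gamma_1\cup\gamma_2\cup\gamma_3$ is an induced subdivided claw centred at $m$.

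The induced-ness is where bipartiteness does the work, and I expect this to go through cleanly. Geodesics have no internal chords. The three geodesics meet only at $m$, since a shared vertex $z\neq m$ at distance $\delta>0$ from $m$ would satisfy $f(z)\leq f(m)-\delta<f(m)$, contradicting minimality of the median. Finally, any chord between two distinct arms is impossible: if $p\in\gamma_a$ and $q\in\gamma_b$ with $p\sim q$, then testing the median inequality at $p$ (using the chord to bound $d(p,x_b)$) forces $q$ to lie within distance $1$ of $m$, and symmetrically $p$ within distance $1$ of $m$; the only remaining possibility is a triangle on $m$ and two of its neighbours, which bipartiteness forbids. Hence the union is an induced subdivided claw $S_{\rho_1,\rho_2,\rho_3}$.

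It then remains to control the arm lengths, and this is the step I expect to be the main obstacle. From the triangle inequality $\rho_a+\rho_b\geq d(x_a,x_b)>2t$ for each pair, so at most one of the $\rho_k$ can be at most $t$; thus at least two arms already exceed length $t$. The delicate point is to rule out the degenerate case in which the median sits close to one terminal, leaving the third arm short. This is exactly where the common-layer hypothesis $x_1,x_2,x_3\in L_i$ enters: being a median forces $m$ to lie between the short-arm terminal and each of the other two (a step towards that terminal must strictly increase the distance to both others), so $m$ lies on geodesics $x_1x_2$ and $x_1x_3$; combining this with the fact that all three terminals are equidistant from $v$ and with the parity constraints of the bipartite layering should pin the third arm length to be at least $t$ as well. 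Once all $\rho_k\geq t$, truncating each $\gamma_k$ to its first $t$ edges yields an induced $S_{t,t,t}$ inside $G[C]$, hence inside $G$, giving the desired contradiction and establishing the bound.
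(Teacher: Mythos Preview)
Your greedy selection of three pairwise $(>2t)$-separated vertices in $L_i$ is correct and matches the paper. Your median construction is a genuinely different idea from the paper's, and the argument that $\gamma_1\cup\gamma_2\cup\gamma_3$ is induced in the bipartite cluster is clean and correct: geodesics meeting only at $m$, and the chord elimination via $f(p)\le f(m)+1-\beta$, both go through exactly as you describe.

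The gap is precisely where you flag it. Nothing you have written prevents $\rho_1$ from being small, or even zero (in which case $m=x_1$ and your union is just an induced path, not a subdivided claw at all). Your claim that ``$m$ lies on geodesics $x_1x_2$ and $x_1x_3$'' does not follow from the one-step argument: you correctly show that the \emph{first} step from $m$ towards $x_1$ increases the distance to both $x_2$ and $x_3$ by~$1$, but the next vertex along $\gamma_1$ is no longer a median, so you cannot iterate to conclude $d(x_1,x_2)=\rho_1+\rho_2$. And even granting that conclusion, the appeal to ``equidistance from $v$'' and ``parity'' does not yield $\rho_1\ge t$: the inequalities $\rho_1+\rho_2=d_{12}>2t$ and $d(v,m)+\rho_k\ge i$ are perfectly compatible with $\rho_1=0$.

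The paper sidesteps this difficulty by building the spider not from an abstract median but from a minimum \emph{directed} Steiner tree rooted at $v$, whose edges run strictly from layer $L_j$ to layer $L_{j+1}$. The payoff is that any branching vertex $w$ of the tree sits in some layer $L_j$, and the three leaves are all in $L_i$, so the arms from $w$ are \emph{automatically of equal length} $i-j$; then $d_T(a,b)=2(i-j)\ge d_G(a,b)>2t$ forces $i-j>t$ for all three arms simultaneously. (When the tree has two branching vertices $w,w'$ one has to chase a couple of possible extra chords, but the same equidistance-from-the-leaves mechanism does the work.) This layered Steiner-tree structure is exactly what your median, which can drift arbitrarily close to one terminal, lacks.
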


\begin{proof}
Suppose to the contrary that $|L_i|>2\vol(\Delta,2t)$.  Choose vertices $a,b,c\in L_i$ with the property that $d_G(a,b)$, $d_G(a,c)$ and $d_G(b,c)$ are all greater than $2t$.  This is always possible: select $a$ arbitrarily, and eliminate from consideration the at most $\vol(\Delta,2t)$ vertices in $L_i$ that are within distance $2t$ from $a$; choose $b$ from the remaining vertices and eliminate the vertices in $L_i$ that are within distance $2t$ from $b$; at least one choice for~$c$ remains.

Now orient all edges away from $v$, so that each edge goes from level $L_j$ to $L_{j+1}$ for some $j$.  Choose a minimum cardinality directed Steiner tree $T$ connecting $v$ (as root) to all of $a,b,c$. Consider the graph $T'$ induced by $V(T)$.  Suppose that $T'$ has some edge not in $E(T)$, say $e=\{u_j,u_{j+1}\}$ where $u_j\in L_j$ and $u_{j+1}\in L_{j+1}$.  (Recall that $T'$ is bipartite, so there are no edges within a layer;  also the levels were constructed by breadth-first search, so no edges of $T'$ skip a layer.) 

First assume that $T$ has a single vertex $w$ such that either $w\not=v$ and $\deg(w)=4$ or $w=v$ and $\deg(w)=3$. Note that all vertices of $T$ other than $v,w,a,b,c$ have degree~2.  By adding $e$ to $E(T)$ and removing the edges of $T$ lying between $w$ and $u_{j+1}$ we would obtain a smaller tree, contradicting minimality of~$T$.  (Notice that we must remove at least two edges.)  So such an edge~$e$ does not exist.  By choice of $a,b,c$ we know that $d_T(w,a),d_T(w,b), d_T(w,c)\geq t+1$, so $G$ contains an induced $S_{t,t,t}$.  (Note that we are measuring graph distances within the tree~$T$ here.)

Next assume that $T$ has two vertices $w$ and $w'$, with $w'$ in a higher numbered level than $w$, and such that either (i)~$w\not=v$ and and $\deg(w)=\deg(w')=3$ or (ii)~$w=v$ and $\deg(w)=2$ and $\deg(w')=3$. Note that all vertices of $T$ other than $v,w,w',a,b,c$ have degree 2.  Suppose, without loss of generality, that $a$ and~$b$ are descendants of~$w'$.  With two exceptions, we may add $e$ to $E(T)$ and remove all the edges between $u_{j+1}$ and $w$, or $w'$ as appropriate, to obtain a smaller tree, contradicting minimality of~$T$.  Those exceptions occur when $w'\in L_j$, i.e., $u_j$ is in the same level as $w'$.  In that case, one edge is added and one removed and the size of the tree remains unchanged.  There are two candidate vertices for $u_{j+1}$; these are the vertices in $L_{j+1}\cap V(T)$, say $x$ and $y$, that are adjacent to $w'$.   Let $x$ (respectively, $y$) be the vertex on the path in $T$ from $w'$ to $a$ (respectively $b$).  To summarise: The graph induced by $V(T)$ is tree $T$, with the possible addition of one or two of the edges $\{u_j,x\}$ and $\{u_j,y\}$. (Refer to Figure~\ref{fig:exceptions}.)

\begin{figure}
\begin{tikzpicture}[scale=0.7, semithick]
      \draw (-0.5,0) node [vertex] (v) {} node [above] {$v$};
      \draw (2,0) node [vertex] (w) {} node [above] {$w$};
      \draw (5,1) node [vertex] (w') {} node [above left] {$w'$};
      \draw (5,-2) node [vertex] (uj) {} node [below left] {$u_j$};
      \draw (6.2,1.7) node [vertex] (x) {} node [below right] {$x$};
      \draw (6.2,0.3) node [vertex] (y) {} node [below right] {$y$};
      \draw (9,2.5) node [vertex] (a) {} node [right] {$a$};
      \draw (9,0) node [vertex] (b) {} node [right] {$b$};
      \draw (9,-3) node [vertex] (c) {} node [right] {$c$};

      \draw (w') -- (x);
      \draw (w') -- (y);
      
      \draw [dash dot] (v) -- (w);
      \draw [dash dot] (w) -- (w');
      \draw [dash dot] (w) -- (uj);
      \draw [dash dot] (uj) -- (c);
      \draw [dash dot] (x) -- (a);
      \draw [dash dot] (y) -- (b);
      
      \draw [dotted, thick, color=red] (uj) -- (x);
      \draw [dotted, thick, color=red] (uj) -- (y);
      
      \draw (-0.5,-4.4) node {$L_0$} node [above=5pt] {$\uparrow$};
      \draw (5,-4.4) node {$L_j$} node [above=5pt] {$\uparrow$};
      \draw (6.2,-4.4) node {$L_{j+1}$} node [above=5pt] {$\uparrow$};
      \draw (9,-4.4) node {$L_i$} node [above=5pt] {$\uparrow$};
  
\end{tikzpicture}
\caption{The two exceptional edges (shown red, dotted).  The solid lines are edges and the dot-dashed lines are paths.}
\label{fig:exceptions}
\end{figure}

By choice of $a,b,c$ we know that $d_T(w',a),d_T(w',b)\geq t+1$.  In the case that neither of the exceptional edges is present, we have an induced  $S_{t,t,t}$ with its centre at~$w'$ and with the path from $w'$ to $c$ routing via $w$. In the case of one exceptional edge, say $\{u_j,x\}$, we have an induced  $S_{t,t,t}$ with its centre at $x$ and with the path from $x$ to $b$ routing via $w'$, and the path  from $x$ to $c$ routing via $u_j$.  In the case of two exceptional edges, we have an induced  $S_{t,t,t}$ with its centre at $u_j$ and with the path from $u_j$ to $a$ routing via $x$, and the path  from $u_j$ to $b$ routing via $y$.  

As the above cases are exhaustive, we obtain a contradiction to $G$ being $S_{t,t,t}$-free.
\end{proof}

\begin{corollary}
Suppose that $\Delta\geq3$, $\lambda>0$ and $t\geq2$. If $G$ is a $S_{t,t,t}$-free graph of maximum degree~$\Delta$ and $v\in V(G)$, then $W_1(\mu_{G,\lambda}^{(v,0)},\mu_{G,\lambda}^{(v,1)})=O(1)$.
\end{corollary}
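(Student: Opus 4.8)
The plan is to reuse the van den Berg--Brouwer coupling of Figure~\ref{fig:couple} exactly as in Corollary~\ref{cor:clawW}, the only new ingredient being the layer bound just established. First I would reduce to the case that $G$ is connected: since $S_{t,t,t}$ is connected, each component of $G$ is itself $S_{t,t,t}$-free, and the distributions $\mu_{G,\lambda}^{(v,0)}$ and $\mu_{G,\lambda}^{(v,1)}$ differ only on the component containing~$v$, so the two samples can be coupled to agree identically off that component. I would also observe that the correctness argument in the proof of Lemma~\ref{lem:couple}, showing that the coupling produces genuine samples from $\mu_{G,\lambda}^{(v,1)}$ and $\mu_{G,\lambda}^{(v,0)}$, nowhere uses claw-freeness: it relies only on $G[C]$ being connected and bipartite (which holds for any $G$, since any vertex of $R\cap B$ is isolated in $G[R\cup B]$) and on the identity of the external extensions of $R\cap C$ and $B\cap C$. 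Hence the coupling is valid here, and $W_1(\mu_{G,\lambda}^{(v,0)},\mu_{G,\lambda}^{(v,1)})\leq\Ex[d_\mathrm{H}(R,B)]=\Ex|R\oplus B|=\Ex|C|$.

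It therefore remains to show $\Ex|C|=O(1)$, and for this I would run the breadth-first exploration of Algorithm~\ref{alg:BFS} and control its depth. Writing $M=2\vol(\Delta,2t)$, the preceding lemma gives $|L_i|\leq M$ for every layer, so at each iteration the candidate set $N=U\cap\partial L_d$ has $|N|\leq\Delta M$. The search halts precisely when some layer $L_{d+1}=N\cap B$ (for $d$ even) or $N\cap R$ (for $d$ odd) comes out empty, and the key estimate is that this happens with probability bounded below by a positive constant, uniformly over the history revealed so far. Conditioning on that history amounts to pinning the already-queried vertices to $0$ or~$1$, which for the hard-core model yields the hard-core distribution on an induced subgraph; since the event ``every vertex of $N$ is absent'' is decreasing, Lemma~\ref{lem:sd} (stochastic domination by the product Bernoulli measure with parameter $p=\lambda/(1+\lambda)$) gives $\Pr(N\cap B=\emptyset\mid\text{history})\geq(1-p)^{|N|}=(1+\lambda)^{-|N|}\geq(1+\lambda)^{-\Delta M}$, and likewise with $R$ in place of~$B$. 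Because $R$ and $B$ are independent, queries to one do not affect the conditional law of the other. The number of nonempty layers is thus stochastically dominated by a geometric random variable with success probability $q=(1+\lambda)^{-\Delta M}$, so its mean is at most $1/q$, whence $\Ex|C|=\Ex\sum_i|L_i|\leq M/q=2\vol(\Delta,2t)\,(1+\lambda)^{2\Delta\vol(\Delta,2t)}$. This bound depends only on $\lambda$, $\Delta$ and~$t$, giving $W_1=O(1)$.

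The one step needing genuine care is the halting estimate: I must check that conditioning the BFS on its revealed history really does reduce to pinning a set of vertices, yielding a hard-core distribution on an induced subgraph to which Lemma~\ref{lem:sd} applies, and that ``all of $N$ absent'' is correctly handled as a decreasing event, so that domination pushes the probability \emph{up} rather than down. Everything else --- the validity of the coupling, the identity $R\oplus B=C$, and the geometric tail bound on the depth --- is routine once the layer bound $|L_i|\leq M$ from the preceding lemma is in hand; indeed the argument is structurally identical to that of Corollary~\ref{cor:clawW}, with the constant $2$ on the layer size replaced by $M=2\vol(\Delta,2t)$.
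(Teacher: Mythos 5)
Your proposal is correct and follows essentially the same route as the paper: the paper's own (very terse) proof is exactly the van den Berg--Brouwer coupling plus the observation that the BFS layers are bounded by the preceding lemma, so the search halts with constant probability per iteration and $\Ex|C|=O(1)$. Your filled-in details --- that Lemma~\ref{lem:couple}'s correctness argument never uses claw-freeness, that conditioning on the revealed history is a pinning yielding a hard-core law on an induced subgraph, and that the ``all of $N$ absent'' event is decreasing so Lemma~\ref{lem:sd} bounds its probability below by $(1+\lambda)^{-\Delta M}$ --- are precisely what the paper leaves implicit, and they are handled correctly.
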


\begin{proof}
The levels of the breadth-first search remain bounded.  There is a constant probability of the search terminating at each iteration.
\end{proof}

\begin{theorem}\label{thm:main}
For any $\Delta\geq3$, $\lambda>0$ and $t\geq2$, Glauber dynamics for the hard-core distribution at fugacity~$\lambda$ has optimal (i.e., $O(n\log n)$) mixing time on the class of $S_{t,t,t}$-free graphs of maximum degree~$\Delta$.
\end{theorem}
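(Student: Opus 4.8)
The plan is to obtain the theorem as an immediate consequence of the two results that precede it, since the genuine work has already been done upstream. The layer-size lemma controls the breadth-first search of Algorithm~\ref{alg:BFS}, the corollary just above converts this into a uniform bound $W_1(\mu_{G,\lambda}^{(v,0)},\mu_{G,\lambda}^{(v,1)})=O(1)$, and Corollary~\ref{cor:mixing} turns any such bound into optimal mixing. So the proof will reduce to checking that the hypotheses of Corollary~\ref{cor:mixing} are met with a constant that is uniform over the whole class.

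First I would note that the class $\calG$ of $S_{t,t,t}$-free graphs of maximum degree~$\Delta$ is hereditary: both ``no induced $S_{t,t,t}$'' and ``maximum degree at most~$\Delta$'' are preserved under passing to induced subgraphs. This is exactly the standing assumption of Corollary~\ref{cor:mixing}. Next I would record that the Wasserstein bound holds uniformly across $\calG$, with a constant $\eta$ depending only on $\Delta$, $t$ and~$\lambda$. The coupling of Figure~\ref{fig:couple} gives $W_1\leq\Ex|R\oplus B|=\Ex|C|$; the layer-size lemma bounds every level of the search by $2\vol(\Delta,2t)$; and, since $\mu_{G,\lambda}$ is dominated by a product of Bernoullis (Lemma~\ref{lem:sd}), each set of candidate vertices is emptied with probability at least a positive constant depending only on $\Delta$, $t$ and~$\lambda$. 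Hence the depth of the search is stochastically dominated by a geometric random variable and $\Ex|C|=O(1)$, uniformly over $G$ and~$v$.

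With both premisses in hand, invoking Corollary~\ref{cor:mixing} with this $\eta$ yields the claimed $O(n\log n)$ mixing time, the implied constant depending on $\lambda$, $\Delta$ and~$\eta$ (equivalently, on $\lambda$, $\Delta$ and~$t$). I expect no obstacle in this final synthesis; the substantive difficulty lives in the layer-size lemma, whose Steiner-tree minimality argument is what forces $|L_i|$ to stay bounded once a large subdivided claw is excluded. Finally, it is worth remarking that the $S_{t,t,t}$ case already covers arbitrary subdivided claws: an induced $S_{i,j,k}$ embeds in an induced $S_{t,t,t}$ whenever $t\geq\max\{i,j,k\}$, so excluding $S_{i,j,k}$ forces the graph to be $S_{t,t,t}$-free for that~$t$.
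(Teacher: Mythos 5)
Your proposal is correct and takes essentially the same route as the paper: the paper's proof of Theorem~\ref{thm:main} is a one-line invocation of Corollary~\ref{cor:mixing}, with the preceding corollary (resting on the layer-size lemma, the coupling of Figure~\ref{fig:couple}, and the Bernoulli domination of Lemma~\ref{lem:sd}) supplying exactly the uniform Wasserstein bound you describe, and hereditariness of the class being the implicit remaining hypothesis. Even your closing observation matches the paper's remark that the theorem covers $S_{i,j,k}$-free graphs by taking $t=\max\{i,j,k\}$.
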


\begin{proof}
Follows immediately using Corollary~\ref{cor:mixing}.
\end{proof}

Naturally, the theorem also covers $S_{i,j,k}$-free graphs by taking $t=\max\{i,j,k\}$.

\goodbreak
\section{Exponential-time mixing of $H$-free graphs\\when $H$ is neither a path nor a subdivided claw}\label{sec:exponential}

To complete the dichotomy, it only remains to show:

\begin{theorem}\label{thm:torpid}
Let $H$ be a connected graph that is neither a subdivided claw nor a path.  There exists a fugacity~$\lambda$ and an infinite family $\calG$ of $H$-free graphs of maximum degree~3 such that Glauber dynamics for the hard-core distribution on these graphs has exponential mixing time on~$\calG$.
\end{theorem}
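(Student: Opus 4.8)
The plan is to exhibit, for every admissible $H$, a single uniform construction — a long even subdivision of a cubic bipartite expander — and to show that this family is simultaneously $H$-free and slowly mixing, once $\lambda$ is taken large enough (depending on $H$, but on neither $n$ nor the particular member of the family). The starting point is the trichotomy already anticipated in the introduction: a connected $H$ that is neither a path nor a subdivided claw must satisfy at least one of (a) $H$ has a vertex of degree $\geq4$; (b) $H$ contains a cycle; (c) $H$ is a tree with at least two vertices of degree $\geq3$. Indeed, if none of these holds then $H$ is a tree of maximum degree $\leq3$ with at most one branch vertex, hence a path or a subdivided claw.

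For the construction, I would fix an even integer $s$ with $s+1>|V(H)|$ and take a family $(G_0^{(m)})_{m\geq1}$ of cubic bipartite graphs with balanced parts $A,B$ of size $m$ that are $c$-edge-expanders for a fixed constant $c>0$ (random cubic bipartite graphs qualify with high probability, or one may invoke an explicit Ramanujan family). Let $G^{(m)}$ be obtained by subdividing every edge of $G_0^{(m)}$ into a path with $s$ internal vertices, and set $\calG=\{G^{(m)}:m\geq1\}$; each member has maximum degree~$3$ and $n=\Theta(m)$ vertices. The three cases of the trichotomy are then dispatched at once. Case (a) is immediate, since a graph of maximum degree~$3$ contains no vertex of degree $\geq4$. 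For case (b), subdivision multiplies every cycle length by $s+1$, so the girth of $G^{(m)}$ is at least $3(s+1)>|V(H)|$; an induced copy of $H$ would force a cycle of length at most $|V(H)|$ in $G^{(m)}$, a contradiction. For case (c), the only degree-$3$ vertices of $G^{(m)}$ are the original vertices, and any two of them lie at distance at least $s+1$; since a vertex of degree $\geq3$ in $H$ retains that degree in an induced subgraph and therefore maps to a degree-$3$ vertex of $G^{(m)}$, the two branch vertices of $H$ would map to original vertices at distance at most $\mathrm{diam}(H)<s+1$, again a contradiction. Hence every $G^{(m)}$ is $H$-free.

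The remaining, and genuinely substantive, task is slow mixing. Because $s$ is a \emph{fixed} constant, as $\lambda\to\infty$ the hard-core measure on $G^{(m)}$ concentrates on maximum independent sets; on each odd-length subdivided edge the near-maximum states occupy exactly one of its two original endpoints, so the dominant configurations are in bijection with orientations of $G_0^{(m)}$, an original vertex being occupied precisely when all three incident edges point to it. The maximum-weight configurations thus correspond to maximum independent sets of $G_0^{(m)}$, which by expansion are confined to a neighbourhood of the $A$-phase or of the $B$-phase. I would make this rigorous by a Peierls/contour argument: partition the configurations into $\mathcal{S}_A$ and $\mathcal{S}_B$ according to whether more occupied original vertices lie in $A$ or in $B$, and bound the measure of the separating set using the edge-expansion of $G_0^{(m)}$, controlling the contribution of the internal path configurations by choosing $\lambda$ large in terms of $s$ and $c$. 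This should give $\mu(\mathcal{S}_A),\mu(\mathcal{S}_B)=\Omega(1)$ while the bottleneck has measure $e^{-\Omega(m)}$; since a single Glauber step changes the number of occupied original vertices by at most one, the standard conductance bound then yields mixing time $e^{\Omega(m)}=e^{\Omega(n)}$.

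I expect this last estimate to be the main obstacle. The danger is that long subdivided edges weaken the effective coupling between original vertices and wash out the bottleneck — precisely the regime in which one-dimensional chains mix rapidly. This is averted by the order of quantifiers: $s$ is fixed first and only then is $\lambda$ sent to infinity, so that each subdivided edge behaves rigidly and the induced coupling between its endpoints stays bounded below. Making this quantitative — summing over the internal path configurations and showing that the weight of any ``defect'' is exponentially suppressed, uniformly in $m$ — is the one step requiring real care; everything else is bookkeeping.
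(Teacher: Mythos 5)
Your construction and bottleneck are essentially those of the paper: the same trichotomy on $H$ (a degree-$\geq4$ vertex, a cycle, or two branch vertices in a tree), the same family (cubic bipartite expanders with every edge subdivided by a fixed even number $s=2\ell$ of internal vertices, so subdivided edges have odd length), and the same cut through near-balanced configurations fed into the standard conductance bound. Your $H$-freeness argument is complete and in fact more detailed than the paper's, which simply asserts that $\ell$ can be chosen so that the stretched graph contains no induced $H$. The genuine gap is exactly the step you defer at the end: proving that the bottleneck has exponentially small weight relative to the phases. This is not ``bookkeeping''; it is the mathematical core of the result, and the paper's proof is largely devoted to it. Concretely, what must be shown is: (i)~if $|I\cap V_L|=|I\cap V_R|=k$, then the number of \emph{deficient} subdivided edges (both original endpoints occupied, so the $2\ell$ internal vertices carry at most $\ell-1$ rather than $\ell$ occupied vertices) is at least $(2+\alpha)k-n$ when $n/(2+\alpha)\leq k\leq\frac23n$, by expansion, and at least $6k-3n$ for larger $k$, by degree counting; (ii)~consequently every balanced independent set has cardinality at most $\bigl(3\ell+2/(2+\alpha)\bigr)n$, an $\Omega(n)$ deficit against the cardinality $(3\ell+1)n$ attained by either side of the bipartition of the subdivided graph; (iii)~therefore $\wt(\calI_{=})\leq 2^{(6\ell+2)n}\lambda^{(3\ell+2/(2+\alpha))n}$ while $\wt(\calI_{<}),\wt(\calI_{>})\geq\lambda^{(3\ell+1)n}$, so taking $\lambda$ large relative to $\ell$ beats the entropy factor and makes the ratio at most $2^{-n}$. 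Your write-up states this goal but contains none of (i)--(iii), and without them there is no proof.

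Two further points would need repair when filling this in. First, you posit $c$-\emph{edge}-expanders, but the count in (i) wants one-sided \emph{vertex} expansion ($|\Gamma(S)|\geq(1+\alpha)|S|$ for all $S\subseteq V_L$ with $|S|\leq\frac23n$, as supplied by Bassalygo's lemma, which is what the paper uses): with edge expansion alone the deficient edges are not cut edges of any obvious cut, and you would need the additional identity $e(S,T)-e(V_L\setminus S,V_R\setminus T)=6k-3n$ (valid in cubic bipartite graphs with balanced sides, $|S|=|T|=k$) to convert a Cheeger-type bound into a lower bound on $e(S,T)$. Second, your stated target $\mu(\mathcal{S}_A),\mu(\mathcal{S}_B)=\Omega(1)$ is both stronger than necessary and not obviously true --- nothing in the construction forces the two phases to have comparable total weight, and proving concentration on the two phases would require yet more work. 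The conductance argument needs only $\wt(\text{bottleneck})\leq e^{-cn}\min\{\wt(\mathcal{S}_A),\wt(\mathcal{S}_B)\}$, which is what (iii) delivers, since both phases are separately bounded below by $\lambda^{(3\ell+1)n}$.
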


\begin{proof}
Let $\alpha>0$ be sufficiently small. Given $n$, let $G$ be a cubic bipartite graph on $n+n$ vertices that is a $1+\alpha$ expander for sets up to size $\frac23n$.  By this we mean the following.   Let $V_L\cup V_R= V(G)$ be the bipartition of the vertex set of~$G$. Then, every subset $S\subset V_L$ of size at most $\frac23n$ is adjacent to at least $(1+\alpha)|S|$ vertices in $V_R$, and the same is true with $V_L$ and $V_R$ interchanged. Such graphs exist for some fixed $\alpha>0$ by a lemma due to Bassalygo~\cite{Bassalygo};  see Alon~\cite[Lemma 4.1]{Alon}.   Let $\ell$ be a number and let $G^*$ denote the $(2\ell+1)$-stretch of $G$; that is, $G^*$ is obtained from $G$ by subdividing each edge by $2\ell$ new vertices.  Fix $\ell$ sufficiently large that $G^*$ contains no induced~$H$. Let $V_L$ and $V_R$ continue to denote the vertices in~$G^*$ that are inherited from~$G$.

Let $\calI_{<}\cup\calI_{=} \cup\calI_{>}=\calI_G$ be the partition of the independent sets $\calI_G$ of $G$ given by:
$$
\calI_{\circ}=\{I\in\calI:|I\cap V_L|\circ|I\cap V_R|\},
$$
where $\circ\in\{<,=,>\}$.  Note that, to pass from any independent set in~$\calI_<$ to any independent set in~$\calI_>$, Glauber dynamics must pass through an independent set in~$\calI_=$.  To show an exponential lower bound on mixing time it is enough to verify that $\wt(\calI_=)\leq e^{-cn}\min\{\wt(\calI_<),\wt(\calI_>)\}$ for some $c>0$.\footnote{Here, we have extended $\wt(\cdot)$ from configurations to sets of configurations in the obvious way.} This inequality implies that the `conductance' of Glauber dynamics on $G^*$ is exponential small, and hence that the mixing time is exponentially large~\cite[Claim 7.14]{ETH}.

Note that $G^*$ is bipartite, with a balanced bipartition, and having $|V(G^*)|=(6\ell+2)n$ vertices.  By selecting all vertices on one side of the bipartition we obtain an independent set of size $(3\ell+1)n$ in $\calI_<$, and by selecting all vertices in the opposite side an independent set of the same size in $\calI_>$.  Just considering these two independent sets  we see that
$$
\wt(\calI_<),\wt(\calI_>)\geq \lambda^{(3\ell+1)n}.
$$

We will show that the independent sets in $\calI_=$ all have cardinality significantly less than $(3\ell+1)n$.  So if $\lambda$ is sufficiently large, the total weight $\wt(\calI_=)$ of independent sets in $\calI_=$ will be exponentially smaller than $\wt(\calI_<)$.  So consider an independent set $I$ with $|I\cap V_L|=|I\cap V_R|=k$.  In general, each of the paths of length $2\ell+1$ in~$G^*$ joining a vertex $u$ in $V_L$ to a vertex $v$ in $V_R$ is able to support an independent set of cardinality~$\ell$ on its internal vertices (the vertices that are neither in $V_L$ nor in $V_R$).  However, if $u$ and $v$ are both in~$I$ then the internal vertices can only support an independent set of size $\ell-1$. 

By expansion, if $n/(2+\alpha)\leq k\leq\frac23n$, there must be at least $(2+\alpha)k-n$ of these deficient paths.  Thus, when $0\leq k\leq\frac23n$, the cardinality of an independent set $I$ in $G^*$ with $|I\cap V_L|=|I\cap V_R|=k$ is at most 
$$
|I|=2k+3\ell n -\max\{ (2+\alpha)k-n,0\}.
$$
The maximum of this expression is achieved at $k=n/(2+\alpha)$ where it takes the value $(3\ell +2/(2+\alpha))n$.  When $k>\frac23n$ there is a simpler argument.  There are $3k$ edges of $G$ leaving the set $I\cap V_L$ and at most $3(n-k)$ of these can be accommodated in $V_R\setminus I$.  So there are at least $6k-3n$ deficient paths and the cardinality of an independent set In~$G^*$ in this range is at most $2k+3\ell n-(6k-3n)=(3\ell+3)n-4k$.  This expression is maximised at $k=\frac23n$, at which point it evaluates to $(3\ell+\frac13)n$.  Over the whole range of $k$, the maximum cardinality of an independent set in~$\calI_=$ is bounded by $(3\ell +2/(2+\alpha))n$.

As $|V(G^*)|=(6\ell+2)n$, the total weight of independent sets in $\calI_=$ is at most
$$
\wt(\calI_=)\leq 2^{(6\ell+2)n} \lambda^{(3\ell+2/(2+\alpha))n},
$$ 
and hence the ratio of $\wt(\calI_=)$ to $\wt(\calI_<)$ is at most
$$
\frac{\wt(\calI_=)}{\wt(\calI_<)}\leq \big(2^{6\ell+2} \lambda^{2/(2+\alpha)-1}\big)^n.
$$ 
By setting $\lambda$ sufficiently large in terms of $\ell$ (which in turn is determined by $H$) we can ensure that the above ratio is at most~$2^{-n}$.  Thus, Glauber dynamics on $G^*$ has exponentially small conductance, and exponentially large mixing time.  
\end{proof}

%\begin{corollary}\label{cor:multiple}
%Excluding any finite set of connected graphs that includes at least one subdivided claw or path yields optimal mixing.  Excluding any finite set of connected graphs none of which is a subdivided claw or path retains exponential time mixing.
%\end{corollary}

\begin{proof}[Proof of Theorem~\ref{thm:composite}]
The positive direction is immediate from Theorem~\ref{thm:main}.  %For the negative direction, let $\calH$ be the set of excluded graphs.  
Now choose $\ell$ sufficiently large such that the stretched graph $G^*$ in the proof of Theorem~\ref{thm:torpid} contains none of the graphs $H\in\mathcal{H}$.  The construction of Theorem~\ref{thm:torpid} yields a fugacity~$\lambda$ and an infinite family of graphs (both depending on $\ell$) on which Glauber dynamics requires exponential mixing time at fugacity~$\lambda$.
\end{proof}

\section*{Acknowledgements}
I thank Heng Guo and Viresh Patel for useful pointers.

\bibliographystyle{plain}
\bibliography{H-free}

\end{document}